\documentclass{amsart}

\usepackage{amssymb, array} 
\usepackage{mathtools} 
\usepackage{tikz}
\usetikzlibrary{calc}

\newcommand{\ve}{\varepsilon}

\newtheorem{theorem}{Theorem}[section]

\newtheorem{lemma}[theorem]{Lemma}

\title[The perfect conductivity problem]
{The perfect conductivity problem \\
with arbitrary vanishing orders  \\
and non-trivial topology}

\author[M. Sherman]{Morgan Sherman}
\address{Department of Mathematics, California Polytechnic State University, San Luis Obispo, CA
93407}
\author[B. Weinkove]{Ben Weinkove}
\address{Department of Mathematics, Northwestern University, 2033 Sheridan Road, Evanston, IL 60208}

\thanks{Research supported in part by NSF grant DMS-2005311.}

\begin{document}

\maketitle

\vspace{-20pt}

\begin{abstract}
The perfect conductivity problem concerns optimal bounds for the magnitude of an electric field in the presence of almost touching perfect conductors.  This reduces to obtaining gradient estimates for harmonic functions with Dirichlet boundary conditions in the narrow region between the conductors.  In this paper we extend estimates of Bao-Li-Yin to deal with the case when the boundaries of the conductors are given by graphs with arbitrary vanishing orders.  Our estimates allow us to deal with globally defined narrow regions with possibly non-trivial topology.   

We also prove the sharpness of our estimates in terms of the distance between the perfect conductors.  The precise optimality statement we give is new even in the setting of Bao-Li-Yin.
\end{abstract}

\section{Introduction}

Let $\Omega \subset \mathbb{R}^n$ be a bounded open domain with
smooth boundary.
Let $D_1, D_2$ be disjoint domains in $\Omega$
with smooth boundaries 
which are a distance $\varepsilon > 0$ apart,
and a distance at least
$d \gg \varepsilon$ from $\partial \Omega$.
We write $\tilde\Omega = \Omega \setminus (\overline{D}_1 \cup \overline{D}_2)$.

Fix a smooth function $\varphi$ on $\partial \Omega$.
The setup of the \emph{perfect conductivity problem} is the following PDE:
\begin{equation}
  \label{perf cond prob}
  \begin{aligned}
    & \Delta u = 0 \quad \text{ in } \tilde\Omega \\
    & u_+ = u_- \quad \text{ on } \partial D_1 \cup \partial D_2 \\
    & \nabla u = 0 \quad \text{ on } D_1 \cup D_2 \\
    & \int_{\partial D_i}
      \left.\frac{\partial u}{\partial \nu}\right|_+
      = 0 \quad i = 1, 2 \\
    & u = \varphi \quad \text{ on } \partial\Omega . 
  \end{aligned}
\end{equation}
Here $u_+$ and $u_-$ refer to the limits of $u$ from outside and
inside (respectively) the sets $D_1, D_2$.
The third equation implies that $u = C_1$ on $D_1$
and $u = C_2$ on $D_2$, for constants $C_1, C_2$.
The function
$\frac{ \partial u }{ \partial \nu } |_+$
in the fourth line of (\ref{perf cond prob})
is the derivative of $u$ in the direction $\nu$,
the unit outward normal vector on $\partial D_i$.  Namely, at $x_0 \in \partial D_i$ it is 
the limiting value of
$ \nabla u(x) \cdot \nu (x_0)$ as $x \to x_0$
through values within $\tilde\Omega$.

The question is: what happens to
$|\nabla u|$ as $\varepsilon \to 0$? 

This problem has a physical interpretation in terms of electrical conductivity.  The domains $D_1$ and $D_2$ represent perfect conductors and $u$ represents the electric potential.  The question is then how the  magnitude  of the electric field $\nabla u$ may blow up as the perfect conductors approach each other.  When $n=2$, there is also a physical interpretation in terms of composite materials in which $\Omega$ represents the cross-section of a fiber-reinforced composite (here $D_1$ and $D_2$ represent the embedded fibers).   In this case,  the electric potential and field are replaced by the out-of-plane elastic displacement and the stress tensor respectively.  For background and more details, we refer the reader to \cite{BASL, BV, LN, LV} and the references therein.

The standard setting in the literature is that $D_1$ and $D_2$ are strictly convex sets a distance $\varepsilon>0$ apart (see figure \ref{balls}).  In this case there is a single ``narrow region'' of $\tilde{\Omega}$ between the two sets $D_1$, $D_2$.  After translation, this region is given by points $x=(x', x_n)$ with $f(x') \le x_n \le g(x')$ for $|x_1|, \ldots, |x_{n-1}| \le r$, for a uniform $r>0$, where  $f$ and $g$ are smooth functions with $(g-f)(0')=\ve$,  $g-f \ge \ve$ and $D^2 g > 0 > D^2f$.  Here we use the usual notation $x' =(x_1, \ldots, x_{n-1})$.  Points outside this narrow region can be characterized by the fact that they are contained in  a ball of uniform radius  lying completely inside $\tilde{\Omega}$.


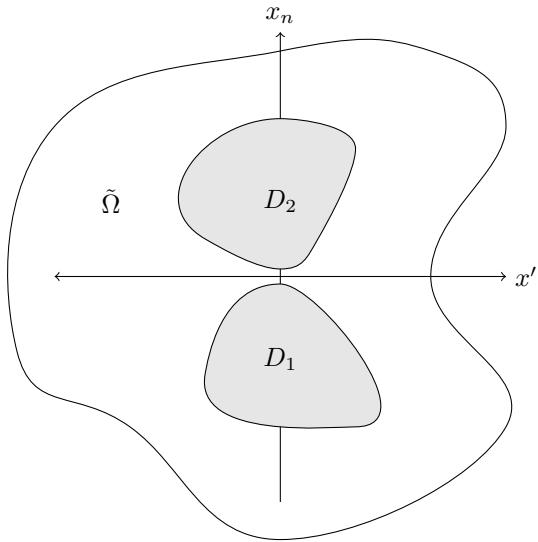
\begin{figure}[ht]
\label{balls}
\begin{tikzpicture}

\path
  (2,0) coordinate (a)
  (3,2) coordinate (b)
  (2,3) coordinate (c)
  (0,3) coordinate (d)
  (-3,2) coordinate (e)
  (-3.5,-1) coordinate (f)
  (-2,-2) coordinate (g)
  (0,-3.5) coordinate (h)
  (3,-2) coordinate(i)
;

\draw plot [smooth cycle, tension=0.75]
  coordinates { (a) (b) (c) (d) (e) (f) (g) (h) (i) };

\node at (-2.25,1) {$\tilde\Omega$};

\draw [<->] (-3,0) -- (3,0) node [right] {$x'$};
\draw [->] (0,-3) -- (0,3.25) node [above] {$x_n$};

\path 
  (0,0.1) coordinate (d2a) 
  (0.5,0.5) coordinate (d2b)
  (1,1.7) coordinate (d2c) 
  (0,2.1) coordinate (d2d) 
  (-1,0.5) coordinate (d2e)
;
\filldraw [fill=gray!20]
  (d2a)
  .. controls +(east:0.3) and +(240:0.3) ..
  (d2b)
  .. controls +(60:0.3) and +(south:0.3) ..
  (d2c)
  .. controls +(north:0.3) and +(east:0.3) ..
  (d2d)
  .. controls +(west:1) and +(150:1) ..
  (d2e)
  .. controls +(-30:0.3) and +(west:0.3) ..
  (d2a)
;
\node at (0,1) {$D_2$};

\path 
  (0,-0.1) coordinate (d1a) 
  (-1,-1.3) coordinate (d1b)
  (1,-2) coordinate (d1c) 
;
\filldraw [fill=gray!20]
  (d1a)
  .. controls +(west:0.8) and +(80:0.2) ..
  (d1b)
  .. controls +(-100:0.9) and +(west:0.3) ..
  (d1c)
  .. controls +(east:1) and +(east:0.5) ..
  (d1a)
;
\node at (0,-1.1) {$D_1$};


\end{tikzpicture}
\caption{The case when $D_1$ and $D_2$
are convex sets a distance $\varepsilon$ apart.}
\label{figure1}
\end{figure}


It has been known for some time that in general the gradient $|\nabla u|$ may blow up as $\ve \rightarrow 0$ \cite{BC, M}.  It was shown in \cite{BC} that $\sup|\nabla u|$ blows up at a rate $\ve^{-1/2}$  for a special solution in $\mathbb{R}^2$.  More general solutions in the case when $D_1$, $D_2$ are disks with  comparable radii in $\mathbb{R}^2$ were dealt with by Ammari-Kang-Lim \cite{AKL} (who gave the lower bound of $\sup|\nabla u|$) and Ammari-Kang-Lee-Lee-Lim \cite{AKLLL} (the upper bound).  Yun \cite{Yun07} extended  \cite{AKL} to more general convex subdomains in $\mathbb{R}^2$ which are positively curved at the closest point.

Bao-Li-Yin \cite{BLY0} then proved a much more general result which allows any dimension $n \ge 2$ and the case when the domains do not necessarily have positively curved boundaries. We now describe their main result.  If there is a single narrow region as above with
\begin{equation} \label{BLYcondition}
\frac{1}{C} (\ve + |x'|^{2\alpha})  \le g(x')-f(x') \le C (\ve + |x'|^{2\alpha}),
\end{equation}
for a uniform $C$ and a constant $\alpha \ge 1$ then $|\nabla u|$ is bounded on $\tilde{\Omega}$ as follows:
\setlength{\extrarowheight}{12pt}
\begin{equation} \label{BLY}
\sup_{\tilde{\Omega}} |\nabla u| \le  \left\{ \begin{array}{ll}\ \displaystyle{\frac{C}{\ve^{\frac{n-1}{2\alpha}}}}, &\quad \textrm{if } n-1<2\alpha \\
 \displaystyle{ \frac{C}{\ve |\log\ve|}}, & \quad \textrm{if } n-1=2\alpha \\
\displaystyle{  \frac{C}{\ve}}, & \quad \textrm{if } n-1>2\alpha. \end{array} \right.
\end{equation}
Bao-Li-Yin \cite{BLY0} also proved the optimality of their estimates under some symmetry assumptions on the domains.  They made use of a linear functional $Q_{\ve}(\varphi)$ which we will describe later  (see Section \ref{sec: opt bds}).

Since then, there have been many further results, refining and extending the estimates (\ref{BLY}) and giving detailed asymptotics, see  \cite{AKLLZ, BLY, BT,  CLX, DL, DZ, HZ, KLY, LWX, LY, Yun09}, for example (this is far from a complete list).

In this paper we give a broad extension of the Bao-Li-Yin estimates in a different direction, allowing for more complicated geometry and topology of the sets $D_1$ and $D_2$.
We will allow the vanishing orders of the boundaries of $D_1$ and $D_2$ to be different in each of the $n-1$ directions $x_1, \ldots, x_{n-1}$.  Namely, we replace the  quantity   $|x'|^{2\alpha}$ in (\ref{BLYcondition}) by  
$$\sum_{j=1}^{n-1} x_j^{2\alpha_j}, \quad \textrm{for constants }\alpha_1, \ldots, \alpha_{n-1} \ge 1.$$
We refer to the constants $\alpha_1, \ldots, \alpha_{n-1}$ as the \emph{vanishing orders} of the boundary, and a key point of this paper is that the $\alpha_j$ need not all be equal.  Crucially, we also allow any number of the $\alpha_j$ to take the value $+\infty$, which we take to mean that $x_j^{2\alpha_j}$ doesn't appear in the sum (we may assume that that $|x_j|<1$).

We also deal with the case when the narrow region is defined by a finite union of sets, each of which is given by the set of points between graphs $f$ and $g$.  This allows the possibility that $D_1$ and $D_2$ are ``close together'' in several regions throughout $\Omega$ (but far away from $\partial\Omega$).   For example there may be a curve (or even higher-dimensional set) of points
on $\partial D_1$ that are at a distance on the order
of $\varepsilon$ from $\partial D_2$.  

Such situations may arise for example when the conductors have non-trivial topology, such as the case of encircled tori, which may be closely touching along a circle of points (see, for example,
figure \ref{fig: encircled tori}).

Our approach will be to cover the narrow region between $D_1, D_2$
with small open sets where we do individually get a simple
picture, and then to piece this together to a global statement.
See figure
\ref{fig: more general setup}.  

\bigskip
{\bf Statement of the main results.} \  The region $\Omega$ is fixed, independent of
$\varepsilon$.  It is convenient to regard $D_1$ and $D_2$ as elements of a smoothly varying family of domains.  We fix a compact set $K$ in $\Omega$, and 
 domains $D^0_1, D^0_2$ with smooth boundaries contained in $K$.  For a small constant $\ve_0>0$ we consider smooth families of domains $\{ D^{\ve}_1\}_{\ve \in [0,\ve_0]}$ and $\{ D_2^{\ve}\}_{\ve\in[0,\ve_0]}$, also contained in $K$ and with smooth boundaries. To make the notion of ``smooth family'' more precise, for $i=1, 2$, write $M^{\ve}_i = \partial D_i^{\ve}$  for $\ve \in [0,\ve_0]$.  These are smooth closed embedded hypersurfaces in $\mathbb{R}^n$.  Then there 
are smooth maps $F_i: M^0_i \times [0,\ve_0] \rightarrow K$ for $i=1, 2$ such that $x \mapsto F_i(x, \ve)$ is a diffeomorphism from $M^0_i$ onto $M^{\ve}_i$ and the identity when $\ve=0$.
 
We assume that $\overline{D_1^{\ve}} \cap \overline{D_2^{\ve}}=\emptyset$ for every $\ve \in (0,\ve_0]$, but the intersection may be nonempty at $\ve=0$.  
We also make an additional assumption:

\medskip
$(*)$ There is a smooth path from $\partial \Omega$ to $\partial D_1^0$ that does not intersect $\overline{D_2^0}$,
and another smooth path from $\partial \Omega$ to $\partial D_2^0$ that does not intersect $\overline{D_1^0}$.
\medskip

This rules out the case when one of the domains $D_1^0, D_2^0$ completely envelops the other.  See the beginning of Section \ref{sectionproof} for more discussion of assumption $(*)$.

Our goal is to obtain optimal bounds, in terms of $\ve$, for the gradient of $u$ solving (\ref{perf cond prob}) for $D_1=D_1^{\ve}$ and $D_2=D_2^{\ve}$.  {\bf For simplicity of notation, in what follows we will drop the superscript $\ve$ and write $D_1$ and $D_2$ instead of $D_1^{\ve}$ and $D_2^{\ve}$.}

We assume there is a constant $c_0>0$
and an open subset $V \subset \tilde\Omega$ such that
$V$ satisfies an interior ball condition of
radius $c_0$.
Specifically we mean by this that for all
$p \in \partial V$ there is a ball $B$ of radius $c_0$
such that $p \in \partial B$ and $B \subset V$.
Thus $V$ consists of points that are ``far away'' from the
narrow region between $D_1$ and $D_2$.
Next we assume that  $\tilde\Omega \setminus V$ can be
covered by open boxes $U_i = U_i^{(r)}$ for $i=1, \ldots, k$,
of a fixed size $r>0$, where for each $i=1, \ldots, k$, 
after possibly translating and rotating the coordinates, 
$U_i = \{ (x_1, \ldots, x_n) \mid \max_j |x_j| < r\}$.
Further, we assume that for each $i=1,\ldots,k$ there are
functions $f_i, g_i$ on the set 
\begin{equation}
  \notag 
  Q_r := \{ x' \mid \max_{j=1, \ldots, n-1} |x_j|<r\}
  \subset \mathbb{R}^{n-1}
\end{equation}
such that 
\begin{equation}
  \notag 
  \tilde U_i \coloneqq U_i \cap \tilde\Omega
  = \{ (x,y) \mid x \in Q_r \text{ and } f_i(x') < x_n < g_i(x') \}.
\end{equation}
See figure \ref{fig: region Ui}.   Moreover, we assume that the boxes $U_i$ overlap sufficiently so that the union of boxes $U_i^{(r/2)}$ of size $r/2$ still covers the region
 $\tilde{\Omega} \setminus V$.


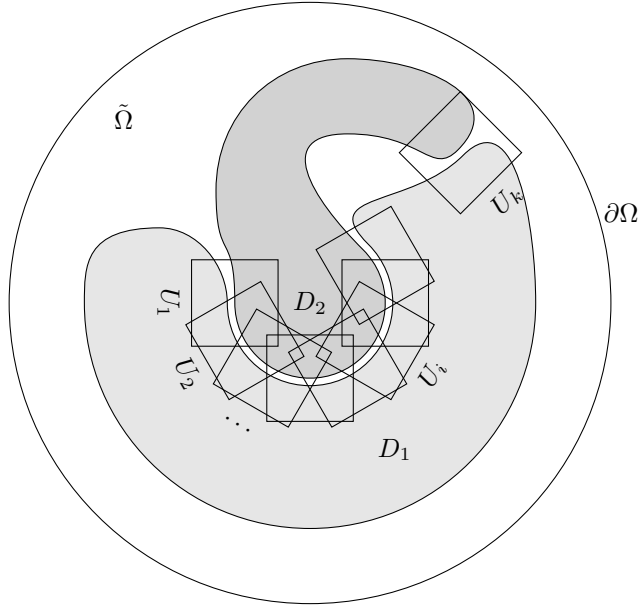
\begin{figure}[h]

\begin{tikzpicture}

\coordinate (A) at (2,2); 
\path
  (A)
  +(135:0.05) coordinate (A2)
  +(-45:0.05) coordinate (A1)
;

\draw circle [radius=4];
\node [anchor=south west, inner sep=1pt] at (15:4) {$\partial\Omega$};
\node at (135:3.5) {$\tilde\Omega$};

\filldraw [fill=gray!20]
  ([shift={(-180:1.1)}] 0,0)
  arc [radius=1.1, start angle=-180, end angle=45]
  .. controls +(135:1) and +(225:1) ..
  (A1)
  .. controls +(45:1) and +(90:1) ..
  (3,0)
  arc [radius=3, start angle=0, delta angle=-180]
  .. controls +(90:0.5) and +(180:1) ..
  (-2,1)
  .. controls +(0:.5) and +(90:0.5) ..
  (180:1.1)
;
\node at (-60:2.25) {$D_1$};

\filldraw [fill=gray!35]
  ([shift={(45:1)}] 0,0)
  arc [radius=1, start angle=45, end angle=-180]
  .. controls +(90:1) and +(-90:1) ..
  (-1.25,1.5)
  .. controls +(90:1) and +(180:1) ..
  (0.5,3.25)
  .. controls +(0:1) and +(45:1) ..
  (A2)
  .. controls +(-135:0.5) and +(0:1) ..
  (0.5,2.25)
  .. controls +(180:1) and +(135:1) ..
  (45:1)
;
\node at (0,0) {$D_2$};

\begin{scope}
[every node/.style={shape=rectangle, minimum size=1.15cm, draw}]

\foreach \t/\n in
{
  -180/1, -150/2, -120/3, -90/4, -60/5, -30/6, 0/7, 30/8
}
  \node [rotate=90+\t] (a\n) at (\t:1) {};
\node [rotate=45] (a0) at (A) {};
\end{scope}

\path
  (a1.south) ++(180:0.3) node [rotate=-90] {$U_1$}
  (a2.south) ++(-150:0.3) node [rotate=-60] {$U_2$}
  (a3.south) ++(-120:0.3) node [rotate=-30] {$\cdots$}
  (a6.south) ++(-30:0.3) node [rotate=60] {$U_i$}
  (a0.south) ++(-45:0.3) node [rotate=45] {$U_k$}
;


\end{tikzpicture}

\caption{A possible configuration of the inclusions $D_1$ and $D_2$.}
\label{fig: more general setup}
\end{figure}


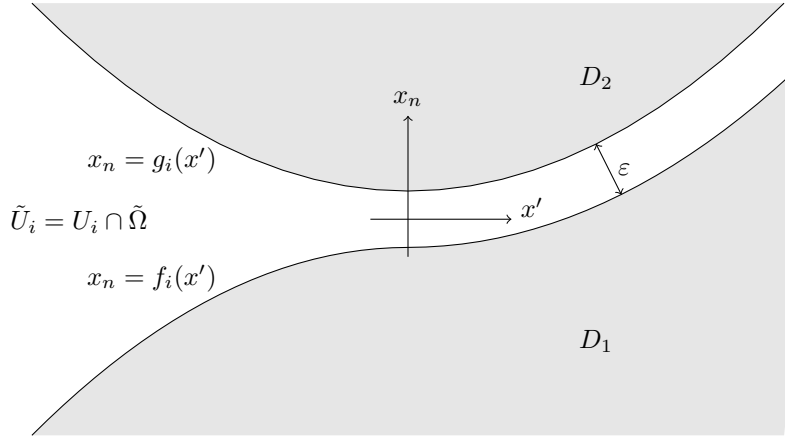
\begin{figure}[h]
\begin{tikzpicture}
[
  region/.style={fill=gray!20},
  boundary/.style={thin}, 
  declare function={
    CC = 1/4; 
    myf(\x) = CC*pow(\x,2);
    myfprime(\x) = 2*CC*\x;
    dist = 0.3;
    norm(\t) = sqrt(1+pow(myfprime(\t),2));
    scmult(\t) = dist/norm(\t);
    mimicfx(\t) = \t + scmult(\t)*myfprime(\t);
    mimicfy(\t) = myf(\t) - scmult(\t);
    lly=-dist-myf(-2);
  },
  scale=2.5
]

\path [use as bounding box]
  (-2,{-dist-myf(-2)}) rectangle (2, {myf(2)});

\fill [region]
  plot [domain=-2:2] ( \x , {myf(\x)} );
\draw [boundary]
  plot [domain=-2:2] ( \x , {myf(\x)} );

\begin{scope}
\clip (-2,-2) rectangle (2,{myf(2)+0.5});

\fill [region]
  plot [domain=-2:0]
       ( \x , {-dist-myf(\x)} )
  --plot [domain=0:2, variable=\t]
         ( { mimicfx(\t) } , { mimicfy(\t) } )
  -- (2,lly)
  -- cycle;
\draw [boundary]
  plot [domain=-2:0]
       ( \x , {-dist-myf(\x)} )
  -- plot [domain=0:2, variable=\t]
         ( { mimicfx(\t) } , { mimicfy(\t) } );
\end{scope}

\node [anchor=center] at (1,{lly+0.5*(-lly-dist)}) {$D_1$};
\node [anchor=center] at (1, {0.6*myf(2)}) {$D_2$};

\node [below left,inner sep=1pt] at (-1,{myf(-1)}) {$x_n=g_i(x')$};
\node [above left,inner sep=1pt] at (-1,{-dist-myf(-1)}) {$x_n=f_i(x')$};
\node at (-1.75,-0.5*dist) {$\tilde U_i = U_i \cap \tilde\Omega$};

\coordinate (A) at (1,{myf(1)});
\coordinate (B) at  ({mimicfx(1)}, {mimicfy(1)});
\draw [<->]
  (A) -- (B)
  node [midway, right] {$\varepsilon$};

\begin{scope} [shift={(0,-0.5*dist)}]
\draw [->] (0,-0.2) -- +(0,0.75) node [above] {$x_n$};
\draw [->] (-0.2,0) -- ++(0.75,0) node [anchor=210] {$x'$};
\end{scope}

\end{tikzpicture}
\caption{The neighborhood $\tilde U_i$ in the narrow region}
\label{fig: region Ui}
\end{figure}


Here the graphs of the functions $f_i$ and $g_i$ will
describe the portions of the boundaries of $D_1$ and $D_2$
which lie within $U_i$.  
We assume that the boxes $U_i$ are chosen centered near points
where $D_1$ and $D_2$ are closest together.
Specifically we assume that
there is a constant $C>0$, independent of $\varepsilon$,
such that for all $i = 1, \ldots, k$ there are
positive numbers (or infinities, see below)
$\alpha^i_{1}, \ldots, \alpha^i_{n-1}$ such that 
\begin{equation}
  \label{fi and gi}
  \frac 1 C
  \bigl( \varepsilon + \sum_{j=1}^{n-1} x_j^{2 \alpha^i_j} \bigr)
  < g_i(x') - f_i(x') <
  C
  \bigl( \varepsilon + \sum_{j=1}^{n-1} x_j^{2 \alpha^i_j} \bigr)
  \quad \text{for all } x' \in Q_r .  
\end{equation}
We interpret the quantity
$x_j^{2 \alpha^i_j}$ as
$(x_j^2)^{\alpha^i_j}$, so that, for example,
$x^{2 \cdot \frac12} = | x |$.  
We allow the possibility $\alpha^i_j = \infty$,
in which case we interpret the above
expression containing $x_j^{2 \alpha^i_j}$ to mean
this term does not appear.
We set $\alpha^i \coloneqq (\alpha^i_j, \ldots, \alpha^i_{n-1})$.  

Here and throughout this article we use
$C$ to denote a positive constant that is independent of
$\varepsilon$,
and it may change from line to line.  

We assume that the boxes $U_1, \ldots, U_k$ are fixed independent of $\ve$ but that 
 $f_i, g_i$ may depend on $\varepsilon$.  Nevertheless, it follows from our assumptions that these functions and their derivatives are uniformly bounded independent of $\ve$, and in particular
\begin{equation}
  \notag 
  | f_i(x') |,
  | g_i(x') |,
  | \nabla f_i(x') |,
  | \nabla g_i(x') |
  \le C
  \quad \text{for all } x' \in Q_r. 
\end{equation}

To state our main theorem we define,
for each $i=1,\ldots, k$, 
\begin{equation}
  \notag 
  \gamma_i := \sum_{j=1}^{n-1} \frac 1 {2 \alpha^i_j},
  \ i = 1, \ldots, k 
\end{equation}
(where, if $\alpha^i_j = \infty$, then $\frac 1 {2\alpha^i_j} = 0$),
and set
\begin{equation}
\label{gamma def}
  \gamma := \min_{i=1, \ldots, k} \gamma_i .  
\end{equation}

The main result is:
\begin{theorem} With assumptions  as above, let $u$ solve (\ref{perf cond prob}) for $D_1 = D_1^{\ve}$ and $D_2=D_2^{\ve}$. Given any smooth boundary data $\varphi$ on $\partial \Omega$ there exists a constant $C$ independent of $\ve>0$ such that 
\label{main thm}
\begin{equation}
  \notag 
 \sup_{\tilde{\Omega}} |\nabla u| \le \left\{ \begin{array}{ll}\ \displaystyle{\frac{C}{\ve^{\gamma}}}, &\quad \textrm{if } \gamma<1 \\
 \displaystyle{ \frac{C}{\ve |\log \ve|}}, & \quad \textrm{if } \gamma=1 \\
\displaystyle{  \frac{C}{\ve}}, & \quad \textrm{if } \gamma>1. \end{array} \right.
\end{equation}
\end{theorem}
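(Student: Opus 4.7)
The plan is to adapt Bao-Li-Yin's capacity-plus-barrier strategy to this anisotropic, multi-box setting, via three main steps.

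\emph{Step 1: Decomposition.} Write $u = v_0 + C_1 v_1 + C_2 v_2$, where $v_j$ ($j=1,2$) is the harmonic function on $\tilde\Omega$ with $v_j = \delta_{jk}$ on $\partial D_k$ and $v_j = 0$ on $\partial\Omega$, and $v_0$ is the harmonic function with $v_0 = \varphi$ on $\partial\Omega$ and $v_0 = 0$ on $\partial D_1 \cup \partial D_2$. Integration by parts turns the two flux conditions in (\ref{perf cond prob}) into a $2\times 2$ linear system for $(C_1, C_2)$ with symmetric positive definite matrix $A=(a_{jk})$, where $a_{jk} = \int_{\tilde\Omega} \nabla v_j \cdot \nabla v_k$, and right-hand side built from fluxes of $v_0$. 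Assumption $(*)$ provides the topological hypothesis under which this decomposition is non-degenerate and each $v_j$ is nontrivial.

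\emph{Step 2: Local gradient estimates.} In each box $\tilde U_i$, form the linear interpolant $\bar v_1(x) = (g_i(x')-x_n)/(g_i(x')-f_i(x'))$ (with sign adjusted if $D_1$ is above); then $v_1 - \bar v_1$ vanishes on $(\partial D_1 \cup \partial D_2) \cap U_i$. At a point $x_0 \in \tilde U_i$, setting $\delta := g_i(x_0')-f_i(x_0') \asymp \ve + \sum_j x_{0,j}^{2\alpha_j^i}$, I rescale anisotropically by $y_j = (x_j - x_{0,j})/\delta^{1/(2\alpha_j^i)}$ for $j<n$ (skipping indices with $\alpha_j^i=\infty$) and $y_n = (x_n - x_{0,n})/\delta$. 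This maps a small piece of $\tilde U_i$ onto a unit-sized domain on which the rescaled Laplace equation is uniformly elliptic; standard interior gradient estimates, followed by undoing the scaling, give
\begin{equation}
\notag
|\nabla v_1(x_0)|,\ |\nabla v_2(x_0)| \le \frac{C}{g_i(x_0') - f_i(x_0')}, \qquad |\nabla v_0(x_0)| \le C,
\end{equation}
inside the narrow region (the bound on $\nabla v_0$ exploiting that $v_0 = 0$ on \emph{both} conductor boundaries), while on $V$ and near $\partial\Omega$ standard interior and boundary estimates give uniformly bounded gradients.

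\emph{Step 3: Capacity estimate and conclusion.} Combining the pointwise bounds above with matching lower bounds yields
\begin{equation}
\notag
\int_{\tilde\Omega} |\nabla(v_1 - v_2)|^2 \asymp \sum_{i=1}^k \int_{Q_r} \frac{dx'}{\ve + \sum_{j=1}^{n-1} x_j^{2\alpha_j^i}},
\end{equation}
and by integrating one variable at a time via the substitution $x_j = \ve^{1/(2\alpha_j^i)} y_j$ (dropping coordinates with $\alpha_j^i = \infty$), each box integral is of order $\ve^{\gamma_i - 1}$ if $\gamma_i < 1$, $|\log \ve|$ if $\gamma_i = 1$, and bounded if $\gamma_i > 1$, so the box with minimal $\gamma_i = \gamma$ dominates. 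Meanwhile $\int_{\tilde\Omega} |\nabla(v_1+v_2)|^2$ is a standard capacity between $D_1 \cup D_2$ and $\partial\Omega$ and hence stays bounded. Together these pin down the large eigenvalue of $A$; inverting the $2\times 2$ system yields $|C_1 - C_2|$ of order $\ve^{1-\gamma}$, $1/|\log \ve|$, or $1$ in the three regimes. Combining with $|\nabla u(x)| \le |C_1 - C_2|/(g_i(x')-f_i(x')) + |\nabla v_0(x)|$, and evaluating at a closest point where $g_i - f_i \asymp \ve$, gives the three cases of the theorem. The main obstacle is the mixed-exponent capacity integration: because no single radial rescaling normalizes all the $x_j^{2\alpha_j^i}$ at once, the integral must be computed coordinate by coordinate, with careful bookkeeping when some exponents are infinite (so that the corresponding variable simply drops out of the scaling).
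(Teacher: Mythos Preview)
Your Step 2 contains a genuine error: the anisotropic rescaling $y_j = (x_j - x_{0,j})/\delta^{1/(2\alpha_j^i)}$ for $j<n$ and $y_n = (x_n - x_{0,n})/\delta$ does \emph{not} produce a uniformly elliptic operator. Under this change of variables the Laplacian becomes, after normalizing the $y_n$--coefficient,
\[
\sum_{j<n} \delta^{\,2-1/\alpha_j^i}\,\partial_{y_j}^2 \;+\; \partial_{y_n}^2,
\]
and since each $\alpha_j^i \ge 1$ the tangential coefficients $\delta^{2-1/\alpha_j^i}$ tend to $0$ with $\delta$; the operator degenerates and ``standard interior gradient estimates'' do not apply. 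The bounds you are after are true, but for different reasons. The estimate $|\nabla v_1| \le C/(g_i-f_i)$ follows from \emph{isotropic} rescaling by $\delta$: because $|\nabla f_i|,|\nabla g_i|\le C$, a cylinder of radius $c\delta$ about $x_0$ maps to a unit-scale region of bounded geometry on which the equation is still the Laplace equation. The estimate $|\nabla v_0|\le C$ (your $v_0$ is the paper's $w$ in Lemma~\ref{lemma: bds for grad u in terms of C1-C2}) is obtained by a barrier comparison with a harmonic function on $\Omega\setminus\overline{D_1}$, a domain that does not see the narrow gap at all.

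The paper actually avoids pointwise gradient control in the capacity step altogether. For the \emph{lower} bound on $\int_{\tilde\Omega}|\nabla v_1|^2$ --- which is what forces $|C_1-C_2|$ to be small and hence drives the theorem --- it uses only the one-dimensional Dirichlet principle: for each fixed $x'\in Q_r$, the linear interpolant $w_i$ minimizes $\int_{f_i}^{g_i}|\partial_{x_n}(\,\cdot\,)|^2\,dx_n$ among functions with the same endpoint values, so $\int_{f_i}^{g_i}|\partial_{x_n} v_1|^2\,dx_n \ge 1/(g_i-f_i)$ immediately. Your ``matching lower bounds'' in Step 3 are never specified, and a pointwise lower bound on $|\nabla v_1|$ would be considerably harder than anything needed here. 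For the upper bound on the capacity the paper constructs a global competitor $W=\rho\sum_i\sigma_i w_i+(1-\rho)v_1$ via a partition of unity over the boxes $U_i$ and invokes the Dirichlet principle for $v_1$; this is where the multi-box topology is handled, and again no anisotropic scaling enters. Your substitution $x_j=\ve^{1/(2\alpha_j^i)}y_j$ in the box integral is correct in spirit for $\gamma_i<1$, but note that for $\gamma_i\ge 1$ the resulting $y$--integral diverges and a separate argument is required; the paper handles all three regimes uniformly by first passing to $u_j=x_j^{\alpha_j^i}$ and then to spherical coordinates.
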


One can check that the exponents match with the Bao-Li-Yin estimate (\ref{BLY}) in the case of a single coordinate patch with $\alpha:= \alpha^1_1 = \cdots = \alpha^1_{n-1}$.  

Our next result shows that these estimates are optimal in terms of $\ve$.

\begin{theorem} \label{thmoptimal}
With the assumptions as above,  there exists smooth boundary data $\varphi$ on $\partial \Omega$ such that the following holds.  If $u$ solves (\ref{perf cond prob}) for $D_1 = D_1^{\ve}$ and $D_2=D_2^{\ve}$ then there exists a constant $C>0$ such that for $\ve>0$ sufficiently small,
\begin{equation}
  \notag 
 \sup_{\tilde{\Omega}} |\nabla u| \ge \left\{ \begin{array}{ll}\ \displaystyle{\frac{1}{C\ve^{\gamma}}}, &\quad \textrm{if } \gamma<1 \\
 \displaystyle{ \frac{1}{C\ve |\log \ve|}}, & \quad \textrm{if } \gamma=1 \\
\displaystyle{  \frac{1}{C\ve}}, & \quad \textrm{if } \gamma>1. \end{array} \right.
\end{equation}
\end{theorem}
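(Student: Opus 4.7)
The plan is to adapt the Bao--Li--Yin strategy to the multi-patch anisotropic setting. Decompose the solution as $u = C_1 v_1 + C_2 v_2 + v_\varphi$, where $v_i$ ($i=1,2$) is the capacitary potential harmonic in $\tilde\Omega$ with $v_i = 1$ on $D_i$, $v_i = 0$ on the other conductor and on $\partial\Omega$, and $v_\varphi$ is harmonic in $\tilde\Omega$ with $v_\varphi = 0$ on $D_1 \cup D_2$ and $v_\varphi = \varphi$ on $\partial\Omega$. The two flux conditions in (\ref{perf cond prob}) produce a symmetric positive definite $2\times 2$ linear system for $(C_1, C_2)^T$ with matrix $A_{ij} = \int_{\tilde\Omega}\nabla v_i \cdot \nabla v_j$ and right-hand side involving $b_i = \int_{\partial D_i} \partial_\nu v_\varphi|_+$. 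The matrix $A$ has one $O(1)$ eigenvalue (the capacity of $D_1 \cup D_2$, corresponding roughly to eigenvector $(1,1)$) and one large eigenvalue proportional to $\int_{\tilde\Omega}|\nabla(v_1 - v_2)|^2$ (roughly corresponding to $(1,-1)$), so solving yields $|C_1 - C_2| \ge c\,|b_1 - b_2|/\int_{\tilde\Omega}|\nabla(v_1-v_2)|^2$.

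The key analytic step is the matching two-sided estimate
\begin{equation}
\notag
  \tfrac{1}{C}\,\Psi(\ve) \le \int_{\tilde\Omega}|\nabla(v_1 - v_2)|^2 \le C\,\Psi(\ve),
  \qquad
  \Psi(\ve) = \begin{cases} \ve^{\gamma-1}, & \gamma < 1, \\ |\log\ve|, & \gamma = 1, \\ 1, & \gamma > 1. \end{cases}
\end{equation}
The upper bound is the Dirichlet-energy version of Theorem \ref{main thm}, obtainable via the pointwise estimates $|\nabla v_i| \le C/(g_j - f_j)$ on each patch $\tilde U_j$ and direct integration. For the matching lower bound we localize to a single patch $U_{i_0}$ achieving $\gamma_{i_0} = \gamma$: since $v_1 - v_2$ takes the value $+1$ on $x_n = f_{i_0}(x')$ and $-1$ on $x_n = g_{i_0}(x')$, Cauchy--Schwarz in the $x_n$-direction gives $\int_{f_{i_0}(x')}^{g_{i_0}(x')}|\partial_{x_n}(v_1-v_2)|^2\,dx_n \ge 4/(g_{i_0}(x') - f_{i_0}(x'))$; integrating over $x' \in Q_r$ and applying the upper bound in (\ref{fi and gi}) with the anisotropic rescaling $x_j = \ve^{1/(2\alpha^{i_0}_j)} y_j$ (which converts $\ve + \sum x_j^{2\alpha^{i_0}_j}$ into $\ve(1 + \sum y_j^{2\alpha^{i_0}_j})$ with Jacobian $\ve^{\gamma_{i_0}}$) delivers the required bound on the integral.

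It remains to choose $\varphi$ so that $|b_1 - b_2| \ge c > 0$ uniformly in small $\ve$. Hypothesis $(*)$ is essential here: we pick disjoint smooth paths $\eta_1, \eta_2$ in $\overline{\tilde\Omega}$ from $\partial\Omega$ to $\partial D_1^0, \partial D_2^0$ respectively, with $\eta_i$ avoiding the opposite conductor, and thicken them into disjoint open tubes $T_1, T_2$ that stay a uniform positive distance from the narrow regions. Taking $\varphi$ to be a smooth bump on $\partial\Omega$ concentrated near the endpoint of $\eta_1$, the behaviour of $v_\varphi$ on $T_1 \cup T_2$ is essentially $\ve$-independent, and a harmonic-measure/maximum-principle comparison yields $b_1 \ge c_1$ and $|b_2| \le c_1/2$. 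Combining the ingredients, $|C_1 - C_2| \ge c/\Psi(\ve) = c\,\ve\,\Phi(\ve)$, where $\Phi(\ve)$ is the rate from Theorem \ref{main thm}. Finally, at a point $p_0 \in \partial D_1 \cap U_{i_0}$ with $g_{i_0}(p_0') - f_{i_0}(p_0') \le C\ve$, the segment from $p_0$ to its closest point on $\partial D_2$ has length comparable to $\ve$ and lies in $\tilde\Omega$ except at its endpoints; since $u$ changes by $C_1 - C_2$ along it, the one-dimensional mean value theorem produces a point at which $|\nabla u| \ge c|C_1 - C_2|/\ve \ge c\Phi(\ve)$. The main obstacle will be the lower bound on $\int|\nabla(v_1 - v_2)|^2$: in Bao--Li--Yin's single-patch symmetric setting this reduces to a direct one-variable integration, whereas here one must carefully isolate the contribution of the patch $U_{i_0}$ with $\gamma_{i_0} = \gamma$ without double-counting across patch overlaps, while simultaneously accommodating the differing and possibly infinite vanishing orders $\alpha^{i_0}_j$.
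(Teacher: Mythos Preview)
Your energy estimates and the mean-value-theorem step are essentially the paper's Lemmas~3.2 and~2.1 (with $v_1-v_2$ in place of $v_1$, which is harmless since $\int|\nabla(v_1-v_2)|^2 = a_{11}-2a_{12}+a_{22}$ and $a_{11}$ are comparable once one knows $a_{11}+a_{12},\,a_{22}+a_{12}$ are $O(1)$). The genuine gap is in your treatment of the $2\times 2$ system and the choice of $\varphi$.

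Solving $A\mathbf{C}=-\mathbf{b}$ exactly gives
\[
C_1-C_2=\frac{(a_{11}+a_{12})\,b_2-(a_{22}+a_{12})\,b_1}{\det A}
=\frac{\mu_1 b_2-\mu_2 b_1}{\det A},
\]
where $\mu_i:=a_{ii}+a_{12}=-\int_{\partial\Omega}\partial_\nu v_i$ are positive, $O(1)$, but in general \emph{unequal}. Your asserted bound $|C_1-C_2|\ge c|b_1-b_2|/\int|\nabla(v_1-v_2)|^2$ would require $\mu_1=\mu_2$ (equivalently $a_{11}=a_{22}$, i.e.\ exact symmetry between the two inclusions), and is false otherwise: if $\mu_1/\mu_2=b_1/b_2$ then $C_1-C_2=0$ even though $b_1\ne b_2$. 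In particular, your target inequalities $b_1\ge c_1$, $|b_2|\le c_1/2$ do not force $|\mu_1 b_2-\mu_2 b_1|\ge c$, since nothing prevents $\mu_1/\mu_2$ from being large. (The claim $|b_2|\le c_1/2$ is also not substantiated: via $b_i=\int_{\partial\Omega}\varphi\,\partial_\nu v_i$, you would need $|\partial_\nu v_2|$ to be small on the support of your bump, and there is no reason for that.)

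The paper confronts exactly this difficulty and resolves it differently. It works directly with the weighted quantity $Q_\ve(\varphi)=\mu_1 b_2-\mu_2 b_1$. First, a unique-continuation argument (applied to $v_1-v_2$ restricted to the tube $W_1$ coming from hypothesis $(*)$) produces an open portion $P\subset\partial\Omega$ on which $|\partial_\nu v_1-\partial_\nu v_2|\ge c$ uniformly in $\ve$. Second, along any sequence $\ve_j\to 0$ one passes to a subsequence on which the weights converge, $\mu_1\to a_1$, $\mu_2\to a_2$; then $\varphi$ is taken to be a bump supported in $P$ whose \emph{sign is chosen depending on whether $a_1\le a_2$ or $a_1>a_2$}, so that the two terms in $\mu_1 b_2-\mu_2 b_1$ reinforce rather than cancel. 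A final contradiction argument removes the subsequence. The subtlety you flagged as ``the main obstacle'' (the energy lower bound across overlapping anisotropic patches) is in fact handled routinely by restricting to a single patch $U_{i_0}$; the real obstacle is controlling the numerator, and that requires the sign-matching trick rather than a smallness argument for $b_2$.
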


The statement and proof of Theorem \ref{thmoptimal} appear to be new even in the case when $D_1$ and $D_2$ are strictly convex, since the optimality results of \cite{BLY} made symmetry assumptions on the sets $D_1, D_2$ and $\Omega$.  

We have assumed smoothness of $\partial \Omega$, $\partial D_1$,  $\partial D_2$ and $\varphi$ for the sake of simplicity.  As in \cite{BLY} the regularity can be relaxed to $C^{2+\beta}$ for the boundaries (for some $0<\beta<1$), and $C^2$ for $\varphi$.

The outline of this paper is as follows.  In Section \ref{sectionprelim} we recall some preliminary results from \cite{BLY} which we will need to make use of.  In Sections \ref{sectionproof} and \ref{sec: opt bds} we prove Theorems \ref{main thm} and \ref{thmoptimal} respectively.  Finally, in Section \ref{sectionexamples} we illustrate our results with some examples.

\section{Preliminaries} \label{sectionprelim}

In this section, we gather some preliminary results whose proofs follow from the corresponding arguments of Bao-Li-Yin \cite{BLY}.  First, the bound on $|\nabla u|$ reduces to an estimate on $|C_1-C_2|$, where we recall that $u=C_1$ on $D_1$ and $u=C_2$ on $D_2$.  

\begin{lemma}
\label{lemma: bds for grad u in terms of C1-C2}
There exists a constant $C$ independent of $\varepsilon$ such that
\begin{equation}
  \notag 
  \frac { | C_1 - C_2 | } { \varepsilon }
  \le \sup_{\tilde{\Omega}} |\nabla u| \le
  \frac C { \varepsilon } | C_1 - C_2 | + C.
\end{equation}
\end{lemma}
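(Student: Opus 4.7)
The two inequalities will be proved independently. For the lower bound I would choose $p_1 \in \partial D_1$ and $p_2 \in \partial D_2$ realizing the minimum distance between the two inclusions, so $|p_1 - p_2| = \varepsilon$. The open segment from $p_1$ to $p_2$ lies in $\tilde{\Omega}$: if it met $\overline{D_1}$ at an interior point it would exit $D_1$ at some $q \in \partial D_1$ strictly between $p_1$ and $p_2$, and this $q$ would be closer to $p_2$ than $p_1$ is, contradicting the minimality of $\varepsilon$; the same argument excludes $\overline{D_2}$. Since $u$ extends continuously to the closed segment with values $C_1$ and $C_2$ at the endpoints, the mean value theorem along this segment produces a point where the tangential directional derivative has absolute value $|C_1 - C_2|/\varepsilon$, establishing the lower bound.

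For the upper bound, the maximum principle applied to $u$ on $\tilde{\Omega}$ first gives $\min_{\partial \Omega} \varphi \le C_1, C_2 \le \max_{\partial \Omega} \varphi$. I would then decompose
$$ u = C_2 + (C_1 - C_2)\, v + w, $$
where $v$ is harmonic in $\tilde{\Omega}$ with $v = 1$ on $\partial D_1$ and $v = 0$ on $\partial D_2 \cup \partial \Omega$, and $w$ is harmonic in $\tilde{\Omega}$ with $w = 0$ on $\partial D_1 \cup \partial D_2$ and $w = \varphi - C_2$ on $\partial \Omega$. Assumption $(*)$ guarantees that $\tilde{\Omega}$ is connected with both inclusions accessible from $\partial \Omega$, so both functions are uniquely defined and, since $|\varphi - C_2|$ is uniformly bounded, they have uniformly bounded boundary data. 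The stated upper bound then follows at once from the two estimates $\|\nabla v\|_{L^\infty(\tilde{\Omega})} \le C/\varepsilon$ and $\|\nabla w\|_{L^\infty(\tilde{\Omega})} \le C$.

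These two auxiliary gradient estimates split naturally into contributions from the ``far'' region and from each narrow box $U_i$. Away from the narrow region the interior-ball condition of radius $c_0$, together with the uniform distance $d$ to $\partial \Omega$, yields standard interior and boundary gradient estimates that control both $v$ and $w$ uniformly. Inside each $U_i$ the linear function $(g_i(x') - x_n)/(g_i(x') - f_i(x'))$ serves as an approximate barrier for $v$, producing $|\nabla v| \le C/(g_i - f_i) \le C/\varepsilon$, while for $w$ a barrier proportional to $(x_n - f_i)(g_i - x_n)$ together with boundary $C^{1,\beta}$ estimates yields $|\nabla w| \le C$, since $w$ vanishes on both graphs and is uniformly bounded. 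These are the barrier constructions of Bao--Li--Yin \cite{BLY}, and they depend only on the uniform upper bound $g_i - f_i \le C$ and the smoothness of $\partial D_1, \partial D_2$, not on the specific form of the vanishing orders; they therefore apply verbatim in the present setting with possibly distinct $\alpha^i_j$. The main technical ingredient is thus the narrow-region $C/\varepsilon$ bound on $|\nabla v|$, but this is exactly the BLY barrier argument and poses no new difficulty.
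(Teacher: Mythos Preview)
Your overall strategy matches the paper's exactly: the lower bound via the mean value theorem along a distance-minimizing segment, and the upper bound via the decomposition $u = C_2 + (C_1-C_2)\,v + w$ followed by the two estimates $\|\nabla v\|_{L^\infty} \le C/\varepsilon$ and $\|\nabla w\|_{L^\infty} \le C$. The treatment of $v$ is fine.

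There is, however, a genuine gap in your argument for $w$. The quadratic barrier $(x_n - f_i)(g_i - x_n)$ has size at most $(g_i - f_i)^2/4$ in the narrow region, so to dominate $|w|$---for which the only a priori information is the uniform bound $|w| \le \sup|\varphi - C_2|$---you would need a prefactor of order $(g_i - f_i)^{-2}$. The gradient of such a barrier on $\partial D_1 \cup \partial D_2$ is then of order $(g_i - f_i)^{-1} \ge \varepsilon^{-1}$, not a uniform constant. The argument that actually works, and is what both BLY and the present paper do, is to compare $w$ on $\tilde\Omega$ with the harmonic function $h$ on the \emph{enlarged} domain $\Omega \setminus \overline{D}_1$ that vanishes on $\partial D_1$ and equals $\sup|\varphi - C_2|$ on $\partial\Omega$. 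Since $h \ge 0$ on $\partial D_2$ and $w = 0$ there, the maximum principle yields $|w| \le h$ on $\tilde\Omega$. But $h$ lives on a domain with no narrow region, so $|\nabla h|$ is uniformly bounded near $\partial D_1$; hence $|w(x)| \le C\,\mathrm{dist}(x, \partial D_1)$, which gives $|\partial_\nu w| \le C$ on $\partial D_1$. Repeating with the roles of $D_1$ and $D_2$ swapped, and then invoking the subharmonicity of $|\nabla w|^2$, yields $|\nabla w| \le C$ on all of $\tilde\Omega$. Your citation of BLY is correct in spirit, but the specific barrier you describe is not the one that delivers the bound.
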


\begin{proof}  See the proof of \cite[Proposition 2.1]{BLY}.  For the reader's convenience, we sketch the idea here.  

The lower bound of $\sup|\nabla u|$ follows from the Mean Value Theorem.  For the upper bound, we write $u = v + w + C_2$ where $v$ and $w$ are the unique solutions of 
\begin{equation}
  \notag 
  \begin{aligned}
    \Delta v & = 0 \text{ in } \tilde\Omega, \quad 
    v  = C_1 - C_2 \text{ on } \partial D_1, \quad 
    v  = 0 \text{ on } \partial D_2 \cup \partial \Omega \\
     \Delta w & = 0 \text{ in } \tilde\Omega, \quad 
    w  = 0 \text{ on } \partial D_1 \cup \partial D_2, \quad 
    w  = \varphi - C_2 \text{ on } \partial \Omega.
      \end{aligned}
\end{equation}
Since $|v|\le |C_1-C_2|$ we can apply standard gradient estimates for harmonic functions to obtain $|\nabla v|\le C|C_1-C_2|/\ve$.  On the other hand $|\nabla w|\le C$ on $\partial D_1$ by comparison with a harmonic function on $\Omega\setminus \overline{D}_1$ which vanishes on $\partial D_1$ and has uniformly large absolute value on $\partial \Omega$.  Similarly $|\nabla w|$ is bounded on $\partial D_2$, and  hence  $|\nabla w|\le C$ on $\tilde{\Omega}$.   The upper bound of $|\nabla u|$ follows.
\end{proof}

For the  second result of this section, we need some definitions.  Define functions $v_1$ and $v_2$ by
\begin{equation*}
  \notag 
  \begin{aligned}
  \Delta v_1 & = 0 \text{ in } \tilde\Omega, \quad 
  v_1  = 0 \text{ on } \partial D_2 \cup \partial \Omega, \quad 
  v_1  = 1 \text{ on } \partial D_1 \\
    \Delta v_2 & = 0 \text{ in } \tilde\Omega, \quad 
  v_2  = 0 \text{ on } \partial D_1 \cup \partial \Omega, \quad
  v_2  = 1 \text{ on } \partial D_2
  \end{aligned}
\end{equation*}
As in \cite{BLY}, define the linear functional
\begin{equation}
  \notag 
  Q_{\varepsilon}(\varphi) :=
  \int_{\partial \Omega} \varphi
  \frac { \partial v_1 } { \partial \nu }
  \int_{\partial \Omega}
  \frac { \partial v_2 } { \partial \nu }
  - \int_{\partial \Omega} \varphi
  \frac { \partial v_2 } { \partial \nu }
  \int_{\partial \Omega}
  \frac { \partial v_1 } { \partial \nu }.
\end{equation}
The following gives an estimate for $|C_1-C_2|$.

\begin{lemma}
\label{lemma: bds for |C1-C2|} \ 
Assume there is a uniform constant $c>0$ such that 
\begin{equation}\label{ass12}
- \int_{\partial \Omega} \frac{\partial v_1}{\partial \nu} \ge c,
\quad
- \int_{\partial \Omega} \frac{\partial v_2}{\partial \nu} \ge c.
\end{equation}
Then 
\begin{equation}
  \notag 
c |Q_{\ve}(\varphi)| \left( \int_{\tilde{\Omega}} |\nabla v_1|^2 \right)^{-1} \le 
| C_1 - C_2 |
  \le C |Q_{\ve}(\varphi)| \left( \int_{\tilde{\Omega}} |\nabla v_1|^2 \right)^{-1}.
\end{equation}
\end{lemma}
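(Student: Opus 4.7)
The plan is to reduce the question to a $2\times 2$ linear system for $(C_1, C_2)$ and then carefully estimate the resulting determinant, following the strategy of Bao-Li-Yin. I would first decompose
\[ u = C_1 v_1 + C_2 v_2 + v_3, \]
where $v_3$ is the harmonic function on $\tilde\Omega$ with $v_3 = \varphi$ on $\partial\Omega$ and $v_3 = 0$ on $\partial D_1 \cup \partial D_2$. By linearity this reproduces the correct boundary data for $u$, and imposing the two flux conditions $\int_{\partial D_i}\partial u/\partial\nu|_+ = 0$ yields a linear system $a_{i1}C_1 + a_{i2}C_2 = b_i$ for $i = 1,2$, with $a_{ij} := \int_{\partial D_i}\partial v_j/\partial\nu$ and $b_i := -\int_{\partial D_i}\partial v_3/\partial\nu$.

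Next I would use Green's identity on $\tilde\Omega$ to convert these boundary integrals into volume integrals. Since $v_i = 1$ only on $\partial D_i$ and $v_i = 0$ elsewhere on $\partial\tilde\Omega$, one obtains $a_{ii} = -\int_{\tilde\Omega}|\nabla v_i|^2$, $a_{12} = a_{21} = -\int_{\tilde\Omega}\nabla v_1\cdot\nabla v_2$, and (pairing $v_i$ with $v_3$) $b_i = \int_{\partial\Omega}\varphi\,\partial v_i/\partial\nu$. The divergence theorem applied to the harmonic $v_i$ furthermore gives the row-sum identities $a_{11}+a_{21} = p$ and $a_{12}+a_{22} = q$, where $p := \int_{\partial\Omega}\partial v_1/\partial\nu$ and $q := \int_{\partial\Omega}\partial v_2/\partial\nu$. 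Together with $a_{12} = a_{21}$, Cramer's rule collapses the system to
\[ C_1 - C_2 \;=\; \frac{q\,b_1 - p\,b_2}{a_{11}a_{22} - a_{12}^2}, \]
whose numerator is exactly $Q_\varepsilon(\varphi)$.

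Setting $A := \int_{\tilde\Omega}|\nabla v_1|^2$, it remains to show that the denominator is comparable to $A$. Substituting $a_{21} = p + A$ into $a_{12}+a_{22}=q$ and simplifying produces the closed-form identity
\[ a_{11}a_{22} - a_{12}^2 \;=\; -A(p+q) - p^2 \;=\; A(|p|+|q|) - p^2, \]
where the signs $p, q < 0$ follow from the maximum principle together with \eqref{ass12}. The upper bound $\le CA$ is immediate because $|p|$ and $|q|$ are controlled by standard boundary gradient estimates for harmonic functions on $\partial\Omega$, which sits at a uniform distance from the narrow region.

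The main obstacle is the matching lower bound, and the crucial input will be the inequality $A > |p|$. This is where I would invoke Hopf's lemma: since $v_1 > 0$ in $\tilde\Omega$ and $v_1 = 0$ on $\partial D_2$, one has $\partial v_1/\partial\nu > 0$ on $\partial D_2$, hence $a_{21} > 0$; combined with $a_{21} = p + A$ and $p < 0$ this forces $A > |p|$. The assumption $|q| \ge c$ from \eqref{ass12} then gives
\[ A(|p|+|q|) - p^2 \;\ge\; A(|p|+|q|) - A|p| \;=\; A|q| \;\ge\; cA, \]
and combining with the upper bound completes the proof. The one place where care is needed is the bookkeeping of outward-normal orientations across the various Green and divergence identities; once those are consistent, the structural identity above together with the Hopf-based inequality $A > |p|$ constitute the essential content.
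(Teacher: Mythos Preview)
Your proof is correct and follows essentially the same route as the paper: decompose $u = C_1 v_1 + C_2 v_2 + v_3$, derive the $2\times 2$ system from the flux conditions, identify the numerator of $C_1 - C_2$ as $Q_\varepsilon(\varphi)$ via the row-sum identities, and bound the determinant $a_{11}a_{22}-a_{12}^2$ above and below by constant multiples of $\int_{\tilde\Omega}|\nabla v_1|^2$. The only cosmetic differences are that the paper uses the opposite sign convention for the $a_{ij}$ and estimates the determinant via the factorization $a_{11}(a_{22}+a_{12}) - a_{12}(a_{11}+a_{21})$ rather than your explicit identity $A(|p|+|q|)-p^2$ (these are algebraically identical), and the paper obtains the key inequality $|a_{12}| < a_{11}$ (equivalently your $A > |p|$) directly from $a_{11}+a_{12} > 0$ and the sign $a_{12}\le 0$ coming from the maximum principle, whereas you invoke Hopf's lemma---which is a bit stronger than needed but certainly valid.
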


\begin{proof}   The proof is contained in \cite[Section 2]{BLY}, but again for the sake of convenience we include here a brief outline of the argument.  Define
\[
\begin{split}
a_{11} := {} & -\int_{\partial D_1} \frac{\partial v_1}{\partial \nu} = \int_{\tilde{\Omega}} |\nabla v_1|^2, \quad a_{22}:=  -\int_{\partial D_2} \frac{\partial v_2}{\partial \nu} = \int_{\tilde{\Omega}} |\nabla v_2|^2, \\
a_{12}: = {} & -\int_{\partial D_1} \frac{\partial v_2}{\partial \nu} = \int_{\tilde{\Omega}} \nabla v_1 \cdot \nabla v_2 = -\int_{\partial D_2} \frac{\partial v_1}{\partial \nu}=: a_{21}, \\
\end{split}
\]
where we have used integration by parts.  Define another function $v_3$ by
\[
 \begin{aligned}
  \Delta v_3 & = 0 \text{ in } \tilde\Omega, \quad 
  v_3  = 0 \text{ on } \partial D_1 \cup  \partial D_2, \quad 
  v_3  = \varphi \text{ on }  \partial \Omega,
  \end{aligned}
\]
and for $i=1,2$,
$$b_i := - \int_{\partial D_i} \frac{\partial v_3}{\partial \nu}= \int_{\tilde{\Omega}} \nabla v_i \cdot \nabla v_3 = \int_{\partial \Omega} \varphi \frac{\partial v_i}{\partial \nu}.$$

Since $u=C_1v_1+C_2v_2+v_3$, the fourth line of
(\ref{perf cond prob}) gives
\[
  \notag 
  \begin{aligned}
  a_{11} C_1 + a_{12} C_2 + b_1 & = 0, \quad 
  a_{21} C_1 + a_{22} C_2 + b_2  = 0.
  \end{aligned}
\]
Hence
\begin{equation} 
  \label{eqn for C1-C2}
  C_1 - C_2
  = \frac
  { ( a_{11} + a_{21} ) b_2 - ( a_{22} + a_{12} ) b_1 }
  { a_{11} a_{22} - a_{12}^2 } = \frac{Q_{\ve}(\varphi)}{ a_{11} a_{22} - a_{12}^2 },
\end{equation}
where we have used the fact that 
$$a_{11}+a_{21} = -\int_{\partial \Omega} \frac{\partial v_1}{\partial \nu}, \quad a_{22}+a_{12}=-\int_{\partial \Omega} \frac{\partial v_2}{\partial \nu},$$
and assuming that $a_{11}a_{22} - a_{12}^2 \neq 0$, which we will shortly prove.

From the assumption (\ref{ass12}) and standard derivative estimates for harmonic functions
$v_1$, $v_2$ in a neighborhood of the boundary $\Omega$, we have
$$c \le a_{11} + a_{21} \le C, \quad c \le a_{22} + a_{12} \le C.$$
Next
$$a_{11}a_{22} - a_{12}^2 = a_{11} (a_{22}+a_{12}) - a_{12} (a_{11}+a_{21}),$$ 
and hence, since $a_{12}\le 0$, so in particular, $|a_{12}|<a_{11}$  (using $a_{11}+a_{12}> 0$), 
\begin{equation} \label{det}
c a_{11} \le a_{11}a_{22} -a_{12}^2 \le Ca_{11}.
\end{equation} 
The result follows from (\ref{eqn for C1-C2}) and (\ref{det}).
\end{proof}

\section{Proof of Theorem \ref{main thm}}
\label{sectionproof}

In this section we complete the proof of the main theorem.

First we note that the assumption $(*)$ in the introduction implies that 
there exists a connected domain $W_1$ in $\Omega \setminus (D^0_1 \cup D^0_2)$ with smooth boundary $\partial W_1$ such  that $\partial W_1$ has an open portion on $\partial \Omega$ and another open portion on $\partial D^0_1$.  Moreover, we may assume that $\overline{W}_1$ and $\overline{D^0_2}$ are disjoint.  Similarly there exists another domain $W_2$, interchanging the roles of $D^0_1$ and $D^0_2$.
See figure \ref{fig: star assumption} for an example illustrating
this.


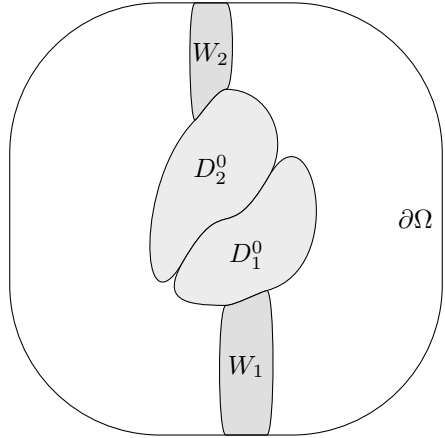
\begin{figure}[h]
\begin{tikzpicture}
[
  regionD1/.style={fill=gray!15},
  regionD2/.style={fill=gray!15},
  regionW/.style={fill=gray!25},
  scale=1.15,
]

\path
  (-0.5,-0.5) coordinate (A)
  (0.5,0.5)   coordinate (B)
  (0,1.5)     coordinate (C)
  +(225:0.5)  coordinate (D)
  (0,-1)      coordinate (E)
  +(20:0.5)   coordinate (F)
;

\def\pathAB{(A) .. controls +(60:1) and +(-120:1) .. (B)}
\def\pathBA{(B) .. controls +(-120:1) and +(60:1) .. (A)}
\def\pathCD{(C) .. controls +(west:0.5) and +(north:0.5) .. (D)}
\def\pathEF{(E) .. controls +(south:0.5) and +(south:0.5) ..  (F)}

\filldraw [regionD2]
  \pathAB
  .. controls +(60:0.5) and +(east:0.5) ..
  (C)
  .. controls +(west:0.1) and +(north east:0.1) ..
  (D)
  .. controls +(south west:1) and +(-120:1) ..
  cycle
;
\coordinate (X2) at (D |- 0,2.5);
\coordinate (Y2) at (C |- 0,2.5);

\filldraw [regionW]
  (C)
  .. controls +(east:0.1) and +(east:0.1) ..
  (Y2) -- (X2)
  .. controls +(west:0.1) and +(south west:0.1) ..
  (D)
  .. controls +(north east:0.1) and +(west:0.1) ..
  cycle
;
\node at ($ ($ (C)!0.5!(D) $)!0.5!($ (X2)!0.5!(Y2) $) $) {$W_2$};

\filldraw [regionD1]
  \pathBA
  .. controls +(-120:0.5) and +(west:0.5) ..
  (E)
  .. controls +(east:0.1) and +(-180+15:0.1) ..
  (F)
  .. controls +(15:1) and +(60:1) ..
  cycle
;
\node at ($ (0,0)!0.5!(F) $) {$D_1^0$};
\node at ($ (0,0)!0.5!(D) $) {$D_2^0$};

\coordinate (X1) at (E |- 0,-2.5);
\coordinate (Y1) at (F |- 0,-2.5);

\filldraw [regionW]
  (E)
  .. controls +(east:0.1) and +(-180+15:0.1) ..
  (F)
  .. controls +(15:0.1) and +(east:0.1) ..
  (Y1) -- (X1)
  .. controls +(west:0.1) and +(west:0.1) ..
  cycle
;
\node at ($ ($ (E)!0.5!(F) $)!0.5!($ (X1)!0.5!(Y1) $) $) {$W_1$}; 

\def\bdryOmega%
{[rounded corners=2cm] (-2.5,-2.5) rectangle (2.5,2.5)}
\draw \bdryOmega;

\node at (2.5,0) [left] {$\partial \Omega$};


\end{tikzpicture}
\caption{Example configuration satisfying $(*)$.}
\label{fig: star assumption}
\end{figure}


In order to apply Lemma \ref{lemma: bds for |C1-C2|} we need to establish the estimates (\ref{ass12}).

\begin{lemma}\label{proveass}
There is a uniform constant $c>0$ such that 
\begin{equation}\label{ass12e}
-\int_{\partial \Omega} \frac{\partial v_1}{\partial \nu} \ge c, \quad -\int_{\partial \Omega} \frac{\partial v_2}{\partial \nu} \ge c.
\end{equation}
\end{lemma}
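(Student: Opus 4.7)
The plan is to use the auxiliary domain $W_1$ from the start of this section to produce a harmonic comparison function on a subdomain of $\tilde{\Omega}$ whose normal derivative on $\partial\Omega$ admits a uniform lower bound; a maximum-principle comparison then transfers this bound to $v_1$, and the $v_2$ statement follows by the symmetric argument using $W_2$.

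Concretely, I would first exploit the smooth family $\{D_i^{\ve}\}$ to construct a smooth family of diffeomorphisms $\Psi_\ve$ of $\overline{\Omega}$ fixing $\partial\Omega$ pointwise with $\Psi_\ve(D_i^0)=D_i^{\ve}$ and $\Psi_0 = \mathrm{id}$, and set $W_1^{\ve} := \Psi_\ve(W_1)$. Then $W_1^{\ve} \subset \tilde{\Omega}$ and $\partial W_1^{\ve}$ splits into three pieces: the fixed open portion $\Gamma_\Omega := \partial W_1 \cap \partial \Omega$, an open portion $\Gamma_{D_1}^{\ve} \subset \partial D_1^{\ve}$, and a ``side'' portion lying strictly inside $\tilde{\Omega}$ and disjoint from $\overline{D_2^{\ve}}$ for all small $\ve$. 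Let $w_1$ be the bounded harmonic function on $W_1^{\ve}$ with boundary values $1$ on $\Gamma_{D_1}^{\ve}$ and $0$ on the rest of $\partial W_1^{\ve}$. Comparing boundary data: on $\Gamma_\Omega$ and $\Gamma_{D_1}^{\ve}$ we have $v_1 = w_1$, and on the side portion $v_1 > 0 = w_1$ by the strong maximum principle applied to $v_1$ in $\tilde\Omega$. Hence $v_1 \ge w_1$ throughout $W_1^{\ve}$. Since both vanish on $\Gamma_\Omega$, a pointwise inward difference-quotient argument gives $-\partial_\nu v_1 \ge -\partial_\nu w_1$ on $\Gamma_\Omega$ (with $\nu$ the outward unit normal to $\Omega$); combined with the global inequality $-\partial_\nu v_1 \ge 0$ on all of $\partial\Omega$ (which follows from $v_1 \ge 0$ vanishing on $\partial\Omega$), this yields
\begin{equation*}
-\int_{\partial\Omega} \frac{\partial v_1}{\partial \nu} \;\ge\; -\int_{\Gamma_\Omega} \frac{\partial w_1}{\partial \nu}.
\end{equation*}

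To finish, I would let $\ve \to 0$: the domains $W_1^{\ve}$ converge smoothly to $W_1$, and Schauder theory up to the smooth part of the boundary gives $C^1$ convergence $w_1 \to w_1^0$ in a neighborhood of $\Gamma_\Omega$, where $w_1^0$ is the analogous harmonic function on $W_1$. Since $w_1^0$ is nontrivial (it equals $1$ on $\Gamma_{D_1}$) and vanishes on $\Gamma_\Omega$, the Hopf lemma gives $-\partial_\nu w_1^0 > 0$ on the interior of $\Gamma_\Omega$, so the limit of the right-hand integral is a fixed strictly positive number, and uniform positivity for small $\ve$ follows by continuity. The identical construction with $W_2$ in place of $W_1$ handles $v_2$. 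The main technical point is ensuring that the Hopf comparison and Schauder convergence both hold up to (the interior of) $\Gamma_\Omega$; this requires the irregular points of $\partial W_1^{\ve}$ --- the ``corners'' where the side portion meets $\partial\Omega$ or $\partial D_1^{\ve}$, and the discontinuities of $w_1$'s boundary data at $\partial \Gamma_{D_1}^{\ve}$ --- to stay at a fixed positive distance from the interior of $\Gamma_\Omega$, which can be arranged by a suitable choice of $W_1$ at the outset.
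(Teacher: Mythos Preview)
Your argument is correct and follows a genuinely different route from the paper's. Both proofs exploit the auxiliary domain $W_1$ and a limiting argument as $\ve \to 0$, but the core analytic tool differs. The paper argues by contradiction: if $\sup_{\Pi} |\partial_\nu v_1^{\ve}| \to 0$ along some sequence $\ve_j \to 0$ (where $\Pi \subset \partial W_1 \cap \partial\Omega$), then after pulling back by diffeomorphisms the functions $v_1^{\ve_j}$ converge on $W_1$ to a harmonic function $v_1^0$ with $v_1^0 = \partial_\nu v_1^0 = 0$ on an open piece of $\partial\Omega$; a unique continuation fact (a harmonic function with vanishing Cauchy data on a smooth open boundary portion vanishes identically) then contradicts $v_1^0 = 1$ on $\partial D_1^0 \cap \partial W_1$. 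You instead build an explicit barrier $w_1$ on $W_1^{\ve}$, compare $v_1 \ge w_1$ via the maximum principle, and invoke the Hopf lemma for the limit $w_1^0$ to get the uniform positivity. Your approach trades unique continuation for a comparison argument plus Hopf; the price is having to manage the discontinuous boundary data of $w_1$ and the corners of $\partial W_1^{\ve}$, which you correctly flag as the main technical points. The paper's route is shorter precisely because it works directly with the globally smooth $v_1^{\ve}$ and needs no auxiliary construction.
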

\begin{proof}  Note that by the definitions of $v_1$ and $v_2$ we have $\frac{\partial v_1}{\partial \nu}, \frac{\partial v_2}{\partial \nu} \le 0$ on $\partial \Omega$.  We will show that $\left|\frac{\partial v_1}{\partial \nu}\right|$ and $\left|\frac{\partial v_2}{\partial \nu}\right|$ must be bounded uniformly away from zero on a portion of the boundary $\partial \Omega$.

We make use of a {\bf basic fact} (see for example \cite[Ex. 2.2]{GT}) that if a function $w$ is harmonic on a connected domain $D$ and has $w=0=\frac{\partial w}{\partial \nu}$ on an open smooth portion of the boundary $\partial D$ then $w$ vanishes identically on $D$.

Let $\Pi$ be an open portion of $\partial \Omega$ which is contained in $\partial W_1$. We recall that $v_1=v_1^{\ve}$ depends on $\ve$.
We claim that for $\ve>0$ sufficiently small, $\sup_{p \in \Pi} |\frac{\partial v_1^{\ve}}{\partial\nu}(p)|  \ge c$ for some constant $c>0$.  Indeed if not then we can find a sequence $\ve_j\rightarrow 0$ with 
\begin{equation} \label{supPi}
\sup_{p \in \Pi} \left| \frac{\partial v_1^{\ve_j}}{\partial\nu}(p) \right| \le 1/j \rightarrow 0, \textrm{ as } j \rightarrow \infty.
\end{equation} The functions $v_1^{\ve_j}$ are not necessarily defined on $W_1$ since $D_1^{\ve}$ is changing with $\ve$.  However, after composing with diffeomorphisms which converge to the identity, and are equal to the identity in a neighborhood of $\partial W_1 \cap \partial \Omega$, and after passing to a subsequence, the functions $v_1^{\ve_j}$ converge smoothly on $W_1$ to a harmonic function $v_1^0$ on $W_1$ which is equal to $0$ on $\partial \Omega \cap \partial W_1$ and equal to $1$ on $\partial D^0_1 \cap \partial W_1$.

In particular the function $v_1^0$ cannot be identically zero on $W_1$.  But from (\ref{supPi}) we obtain $\frac{\partial v_1^0}{\partial \nu}=0$ on an open portion of the boundary of $\partial W_1$, a contradiction by the basic fact.

Hence we have shown that $\sup_{p \in \Pi} |\frac{\partial v_1^{\ve}}{\partial\nu}(p)|  \ge c$  for $\ve>0$ sufficiently small and it follows by standard derivative estimates for $v^{\ve}_1$ that $|\frac{\partial v_1^{\ve}}{\partial\nu}(p)|  \ge c/2$ on a small open portion of $\partial \Omega$.  This establishes the required estimate for $v_1$.  The argument for $v_2$ is similar.
\end{proof}

Note that without assumption $(*)$ the estimates (\ref{ass12e}) may fail.   Indeed, if $D_2$ completely surrounds $D_1$ (for example if $D_1$ is a solid ball and $D_2$ is a solid spherical
shell enclosing $D_1$) then $v_1 \equiv 0$ 
 on the region outside the outer boundary of
$D_2$ and hence $\frac{\partial v_1}{\partial \nu}=0$ on $\partial \Omega$.  We wish to exclude this case, which is not very interesting from the point of view of estimating $|\nabla u|$.
Physically,
if $u$ represents an electric potential and 
$D_1, D_2$ are perfect conductors then $u$ will take
a constant value $C_2 = C_1$ throughout $D_1, D_2$ and the
region between the two.  In this case  $\sup|\nabla u|$ does not blow up as $\ve \rightarrow 0$.

The proof of Theorem \ref{main thm} will now be an almost immediate consequence of the following lemma, whose proof uses the same basic strategy as in \cite{BLY}.  A key difference from \cite{BLY} is that we use a patching argument to deal with multiple coordinate boxes, rather than working in a single one.  Also, the fact that the exponents $\alpha^i_1, \ldots, \alpha^i_n$ are not necessarily equal gives rise to a more complicated integral.

\begin{lemma} \label{lemmakey}
Let $v_1$ satisfy
\[
  \begin{aligned}
  \Delta v_1 & = 0 \text{ in } \tilde\Omega, \quad 
  v_1  = 0 \text{ on } \partial D_2 \cup \partial \Omega, \quad 
  v_1  = 1 \text{ on } \partial D_1.
  \end{aligned}
\]
Then 
\begin{equation}
  \notag 
  \begin{alignedat}{3}
  \frac 1 C
  & < \int_{\tilde \Omega} | \nabla v_1 |^2
  < C,
  & \qquad \text{if } \gamma > 1
  \\
  \frac 1 C \log \frac 1 {\varepsilon}
  & < \int_{\tilde \Omega} | \nabla v_1 |^2
  < C \log \frac 1 {\varepsilon},
  & \qquad \text{if } \gamma = 1
  \\
  \frac 1 C \frac 1 {\varepsilon^{1-\gamma}}
  & < \int_{\tilde \Omega} | \nabla v_1 |^2
  < C \frac 1 {\varepsilon^{1-\gamma}},
  & \qquad \text{if } \gamma < 1,
  \end{alignedat}
\end{equation}
where we recall that $\gamma$ is defined by (\ref{gamma def}).
\end{lemma}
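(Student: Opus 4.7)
The plan is to reduce both estimates to the asymptotic analysis of the weighted integrals
\[
J_i(\varepsilon) := \int_{Q_r} \frac{dx'}{\varepsilon + \sum_{j=1}^{n-1} x_j^{2\alpha^i_j}}, \qquad i = 1, \ldots, k,
\]
which by \eqref{fi and gi} are comparable to $\int_{Q_r}(g_i - f_i)^{-1}\,dx'$. The upper bound will come from a test function and the Dirichlet principle, and the lower bound from a Cauchy--Schwarz estimate across the narrow gap. The main new feature compared to \cite{BLY} is the patching over several coordinate boxes, together with the anisotropic integral analysis required when the $\alpha^i_j$ vary with $j$.

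For the upper bound I construct $\bar v \in H^1(\tilde\Omega)$ sharing the boundary values of $v_1$, and use $\int_{\tilde\Omega}|\nabla v_1|^2 \le \int_{\tilde\Omega}|\nabla \bar v|^2$. In each narrow box $\tilde U_i$, after orienting so that $\{x_n = f_i\}$ lies on one of $\partial D_1, \partial D_2$ and $\{x_n = g_i\}$ on the other, I set $\bar v = (x_n - f_i)/(g_i - f_i)$ (or its complement). Outside the narrow region $\bar v$ is smoothly extended to match the required boundary values on $\partial\Omega, \partial D_1, \partial D_2$, and the pieces are glued by a partition of unity subordinate to the cover $\{U_i^{(r/2)}\}\cup\{V\}$; contributions from $V$ are $O(1)$. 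In each $\tilde U_i$ a direct calculation using the uniform bounds on $|\nabla f_i|, |\nabla g_i|$ gives $|\nabla \bar v|^2 \le C(g_i - f_i)^{-2}$, whence
\[
\int_{\tilde U_i}|\nabla \bar v|^2 \le C\int_{Q_r}\frac{dx'}{g_i - f_i} \le CJ_i,
\]
and summing yields $\int_{\tilde\Omega}|\nabla v_1|^2 \le C\sum_i J_i + C$.

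For the lower bound, choose $i_0$ with $\gamma_{i_0} = \gamma$. For each $x' \in Q_r$, since $v_1$ takes values $0$ and $1$ on the two graphs bounding $\tilde U_{i_0}$, Cauchy--Schwarz in $x_n$ gives
\[
1 \le (g_{i_0}(x') - f_{i_0}(x'))\int_{f_{i_0}(x')}^{g_{i_0}(x')}(\partial_{x_n}v_1)^2\,dx_n,
\]
and integrating in $x'$ yields $\int_{\tilde\Omega}|\nabla v_1|^2 \ge \int_{\tilde U_{i_0}}|\nabla v_1|^2 \ge J_{i_0}/C$.

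The heart of the proof, and the main technical obstacle, is establishing the trichotomy $J_i \asymp \varepsilon^{\gamma_i - 1}$, $\log(1/\varepsilon)$, or $1$. I apply the anisotropic substitution $x_j = \varepsilon^{1/(2\alpha^i_j)} y_j$ (coordinates with $\alpha^i_j = \infty$ contribute a bounded factor and are handled separately), giving
\[
J_i = \varepsilon^{\gamma_i - 1}\int_{B_i}\frac{dy}{1 + \sum_j y_j^{2\alpha^i_j}}, \qquad B_i = \bigl\{|y_j| < r\varepsilon^{-1/(2\alpha^i_j)}\bigr\}.
\]
A further scaling $y_j = \rho^{1/\alpha^i_j} z_j$ shows that the sublevel set $\{\sum_j y_j^{2\alpha^i_j} \le \rho^2\}$ has Lebesgue measure $\asymp \rho^{2\gamma_i}$, and a layer-cake decomposition reduces the integral over $B_i$ to $\int_0^{\varepsilon^{-1/2}}\rho^{2\gamma_i - 1}/(1+\rho^2)\,d\rho$. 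This is bounded for $\gamma_i < 1$, of order $\log(1/\varepsilon)$ for $\gamma_i = 1$, and of order $\varepsilon^{1-\gamma_i}$ for $\gamma_i > 1$; multiplying by $\varepsilon^{\gamma_i - 1}$ gives the three stated asymptotics for $J_i$. Since $\gamma = \min_i \gamma_i$, summing over $i$ gives the upper bound and the single box $i_0$ gives the matching lower bound.
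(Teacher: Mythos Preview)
Your proof is correct and follows essentially the same architecture as the paper: the upper bound via a piecewise-linear test function patched with a partition of unity and the Dirichlet principle, the lower bound via the one-dimensional minimization across the gap (your Cauchy--Schwarz step is equivalent to the paper's observation that linear functions minimize the Dirichlet energy among functions with fixed endpoints), and reduction of both to the integrals $J_i(\varepsilon)$.

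The one genuine technical difference is in the asymptotic analysis of $J_i$. The paper substitutes $u_j = x_j^{\alpha_j}$ to make the denominator isotropic ($\varepsilon + |u|^2$), then passes to spherical coordinates; the angular integral separates as a finite constant and one is left directly with $\int_0^R \rho^{2\gamma_i-1}/(\varepsilon+\rho^2)\,d\rho$. Your route instead scales $x_j = \varepsilon^{1/(2\alpha_j)} y_j$ to pull the $\varepsilon$ out as $\varepsilon^{\gamma_i-1}$, then uses the homogeneity of the sublevel sets $\{\sum y_j^{2\alpha_j}\le\rho^2\}$ (measure $\asymp\rho^{2\gamma_i}$) together with a layer-cake argument to arrive at the same one-variable integral. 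Both work; the paper's substitution is perhaps more explicit (the constant $A$ in the angular integral is written down), while your scaling argument makes the role of $\gamma_i$ as a homogeneity exponent more transparent and avoids spherical coordinates entirely. One small point: in your upper bound, ``smoothly extended to match the required boundary values'' outside the narrow region is most cleanly done, as in the paper, by taking $\bar v = v_1$ itself on $V$ (where $|\nabla v_1|$ is already uniformly bounded) rather than constructing a separate extension.
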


\begin{proof}
Recall that for each $i = 1, \ldots, k$ the
boundaries of $D_1, D_2$ are described in the box $U_i$ as the
graphs of the functions $f_i(x')$ and $g_i(x')$,
with $f_i < g_i$.  

For each $i$ define the function
\begin{equation}
  \label{def w sub i}
  w_i(x) = \frac%
  {g_i(x') - x_n}%
  {g_i(x') - f_i(x')}
\end{equation}
which is defined on all of
$U_i \cap \tilde\Omega$.  

For the lower bound of $\int_{\tilde{\Omega}}|\nabla v_1|^2$, we fix an index $i$ and work in  $U_i$.
For $x' \in Q_r$ the
function $x_n \mapsto w_i(x', x_n)$ is of the form
$x_n \mapsto a(x') x_n + b(x')$
and has the property that
$w_i|_{x_n=f_i(x')} = 1 = v_1|_{x_n=f_i(x')}$
and
$w_i|_{x_n=g_i(x')} = 0 = v_1|_{x_n=g_i(x')}$.  
In particular,
for any fixed $x' \in Q_r$,
\begin{equation}
  \notag 
  \int_{f_i(x')}^{g_i(x')} | \partial_{x_n} w_i |^2 \, dx_n
  \le
  \int_{f_i(x')}^{g_i(x')} | \partial_{x_n} v_1 |^2 \, dx_n
  \le
  \int_{f_i(x')}^{g_i(x')} | \nabla v_1 |^2 \, dx_n, 
\end{equation}
where the first inequality follows from the fact that linear functions of one variable minimize the Dirichlet energy among functions with the same endpoints.
Therefore
\begin{equation}
  \label{lwr bd v1 calc}
  \begin{aligned}
    \int_{\tilde\Omega} | \nabla v_1 |^2
    & \ge \int_{U_i \cap \tilde\Omega} | \nabla v_1 |^2 \\
    & = \int_{x' \in Q_r} \int_{f_i(x')}^{g_i(x')}
      | \nabla v_1 |^2 \, dx_n \, dx' \\
    & \ge \int_{x' \in Q_r} \int_{f_i(x')}^{g_i(x')}
      | \partial_{x_n} w_i |^2 \, dx_n \, dx' \\
    & = \int_{x' \in Q_r}
      \frac {dx'} {g_i(x') - f_i(x')} \\
    & \ge \frac 1 C \int_{x' \in Q_r}
      \frac{dx'}{ \varepsilon + \sum_{j=1}^{n-1} x_j^{2 \alpha^i_j} },
  \end{aligned}
\end{equation}
recalling (\ref{fi and gi}). 
Define
\begin{equation}
  \label{Ii}
  I_i(\varepsilon) := \int_{x' \in Q_r}
  \frac{dx'}{ \varepsilon + \sum_{j=1}^{n-1} x_j^{2 \alpha^i_j} } . 
\end{equation}
Then we see that 
$\int_{\tilde\Omega} | \nabla v_1 |^2
\ge \frac 1C \max_i I_i(\varepsilon)$.  
Below we will bound the term $I_i(\varepsilon)$.  
Before then we consider the upper bound.

Note that by standard estimates and the definition
of $V \subset \tilde\Omega$
we may assume that 
$|\nabla v_1|^2 \le C$ at all points of $V \subset \tilde\Omega$.  
In particular $\int_{V} |\nabla v_1|^2 < C$.           

Recall that the region $\tilde{\Omega} \setminus V$ is covered by the ``half sized'' boxes $U^{(r/2)}_i$ of size $r/2$.  We denote by $U^{(3r/4)}_i$ the ``three-quarter sized'' boxes of size $3r/4$.  Let $\{\sigma_i\}_{i=1}^k$ be a partition of unity such that: (1) each $\sigma_i: \bigcup_j U_j \rightarrow [0,1]$ is a smooth function with compact support in $U_i=U^{(r)}_i$; and (2) $\sum_i \sigma_i=1$ on $\bigcup_i U^{(3r/4)}_i$.  The function $$w:= \sum_{i=1}^k \sigma_i w_i,$$ is a well-defined smooth function on $\bigcup_{i=1}^k U_i$ which is equal to $1$ on $\partial D_1 \cap \bigcup_i U^{(3r/4)}_i$ and equal to $0$ on $\partial D_2 \cap \bigcup_i U^{(3r/4)}_i$. 

Next let
$\rho: \Omega \rightarrow [0,1]$
be a smooth  cut-off function which is identically equal to $1$ on the union of half-sized boxes $\bigcup_i U_i^{(r/2)}$ and is supported on the union of three-quarter sized boxes $\bigcup_i U^{(3r/4)}_i$.

Then the function $\displaystyle{W=\rho w + (1-\rho) v_1}$ has the following properties.

\begin{enumerate}
\item[(i)] $W$ is a well-defined continuous function on $\overline{\tilde{\Omega}}$, smooth on $\tilde{\Omega}$.
\item[(ii)] $W$ is equal to $1$ on $\partial D_1$ and equal to $0$ on $\partial D_2 \cup \partial \Omega$.
\end{enumerate}

For (ii), we observe that at points in $\bigcup_i U_i^{(3r/4)}$ the functions $w$ and $v_1$ are both equal to $1$ on $\partial D_1$ and equal to $0$ on $\partial D_2$, whereas outside this union, $W=v_1$ which is equal to $1$ on $\partial D_1$ and vanishes on $\partial D_2 \cup \partial \Omega$. 

Note also that $0 \le v_1, w, w_i, \sigma_i, \rho \le 1$ at all
points of $\tilde{\Omega}$ that each is defined,
and that $| \nabla \sigma_i |, | \nabla \rho | \le C$.

Since $v$ is harmonic on $\tilde{\Omega}$, conditions (i) and (ii) imply that $\int_{\tilde{\Omega}}|\nabla v_1|^2 \le \int_{\tilde{\Omega}}|\nabla W|^2$ and hence
\begin{equation}
  \label{upper bd calc for v1}
  \begin{aligned}
    \int_{\tilde\Omega} |\nabla v_1|^2
    & \le \int_{\tilde\Omega} |\nabla (\rho w + (1 - \rho) v_1)|^2 \\
    & = \int_{\tilde\Omega}
      | w \nabla \rho
      + \rho \nabla w
      - v_1 \nabla \rho
      + (1-\rho) \nabla v_1 |^2 \\
    & \le C \bigl(
      1 + \int_{\cup_{i=1}^k U_i  \cap \tilde\Omega} |\nabla w|^2
      \bigr) \\
    & \le C \bigl(
      1 + \sum_{i=1}^k \int_{U_i \cap \tilde\Omega}
      | \nabla w_i |^2 \bigr),
  \end{aligned}
\end{equation}
where for the third line we used the fact that $|\nabla v_1|$ is uniformly bounded on the set $V$ and hence on $\tilde{\Omega} \setminus \bigcup_i U_i^{(r/2)}$.
From (\ref{def w sub i}) we estimate on $U_i \cap \tilde{\Omega}$,
\begin{equation}
  \notag 
  | \nabla w_i |^2(x)
  \le
  \frac
  { C } 
  { ( g_i(x') - f_i(x') )^2 }, 
\end{equation}
and hence for each $i = 1, \ldots, k$,
\begin{equation}
\label{up bd for int of deriv of wi squared}
  \begin{aligned}
  \int_{U_i \cap \tilde \Omega}
  | \nabla w_i |^2
  & \le \int_{x' \in Q_r} \int_{f_i(x')}^{g_i(x')}
  \frac { C\,dx_n } { (g_i(x') - f_i(x'))^2 }\,dx'
  \\
  & = C \int_{x' \in Q_r} \frac { dx' } { g_i(x') - f_i(x') }
  \\
  & \le C I_i(\varepsilon) . 
  \end{aligned}
\end{equation}
Combining
(\ref{upper bd calc for v1})
and 
(\ref{up bd for int of deriv of wi squared})
we find 
\begin{equation}
  \label{upper bd of int of sq of deriv of v1 summary}
  \int_{\tilde\Omega} | \nabla v_1 |^2
  \le C\left( 1+
  \max_{i = 1, \ldots, k} I_i(\varepsilon) \right).
\end{equation}

The lemma is then a consequence of (\ref{lwr bd v1 calc}), (\ref{upper bd of int of sq of deriv of v1 summary}) and the following elementary claim.

\medskip
\noindent
{\bf Claim.} \ For $i=1, \ldots, k$, writing $\displaystyle{\gamma_i = \sum_{j=1}^{n-1} \frac{1}{2\alpha_j^i}}$, we have
\begin{equation}
  \label{bds for I}
  \begin{alignedat}{3}
  \frac 1 C
  & < I_i({\ve})
  < C,
  & \qquad \text{if } \gamma_i > 1
  \\
  \frac 1 C \log \frac 1 {\varepsilon}
  & < I_i({\ve})
  < C \log \frac 1 {\varepsilon},
  & \qquad \text{if } \gamma_i = 1
  \\
  \frac 1 C \frac 1 {\varepsilon^{1-\gamma_i}}
  & < I_i({\ve})
  < C \frac 1 {\varepsilon^{1-\gamma_i}},
  & \qquad \text{if } \gamma_i < 1,
  \end{alignedat}
\end{equation}
where we recall that $I_i(\ve)$ is defined by (\ref{Ii}).

\medskip
To prove the claim, we drop the index $i$ and write
 $\alpha = (\alpha_1, \ldots, \alpha_{n-1})$.
Rearranging the components if necessary we
assume that $1 \le \alpha_1, \ldots, \alpha_{\ell} < \infty$ 
and $\alpha_{\ell+1} = \ldots = \alpha_{n-1} = \infty$.
Then we need to compute the integral
\begin{equation}
\label{rewrite the integral}
  I(\varepsilon)
  = \int_{x' \in Q_r}
  \frac
  { dx' }
  { \varepsilon + \sum_{j=1}^{n-1} x_j^{2\alpha_j} }
  = 2^{n-1} r^{n-1-\ell}
  \underbrace{\int_0^r \cdots \int_0^r}_{\ell}
  \frac
  { dx_1\,\cdots\,dx_{\ell} }
  { \varepsilon + \sum_{j=1}^{\ell} x_j^{2\alpha_j} } .
\end{equation}
Note that $\ell = 0$
if and only if $\alpha = ( \infty, \ldots, \infty )$.
In this case the above integral evaluates
exactly to $2^{n-1}r^{n-1}/\ve$, giving (\ref{bds for I}) in the case $\gamma=\gamma_i=0$.  Henceforth we  assume $\ell \ge 1$.

We first reduce the claim to estimating an integral of one variable.  Namely, we will show that
\begin{equation}
\label{claimse}
  \frac 1 C \int_0^{R_0}
  \frac
  { \rho^{2 \gamma - 1} \, d\rho }
  { \varepsilon + \rho^2 }
  \le I(\varepsilon)
  \le C \int_0^{R_1}
  \frac
  { \rho^{2 \gamma - 1} \, d\rho }
  { \varepsilon + \rho^2 },
\end{equation}
where $R_0 \coloneqq \min_j r^{\alpha_j}$,
$R_1 \coloneqq \sqrt\ell \max_j r^{\alpha_j}$ (for $j$ ranging from $1$ to $\ell$) and
$C>0$ depends only on $\alpha$, $r$ and $n$.

Making the substitution $u_j = x_j^{\alpha_j}$ we find
\begin{equation}
\label{rewrite the int pt 2}
  \begin{aligned}
  \int_0^r \cdots \int_0^r
  \frac
  { dx_1\,\cdots\,dx_{\ell} }
  { \varepsilon + \sum_{j=1}^{\ell} x_j^{2\alpha_j} }
  & =
  \frac 1 {\alpha_1\cdots \alpha_{\ell}}
  \int_0^{r^{\alpha_1}} \cdots \int_0^{r^{\alpha_{\ell}}}
  \frac
  { \prod_{j} u_j^{ \frac 1 {\alpha_j} - 1 } du_1\,\cdots\,du_{\ell} }
  { \varepsilon + \sum_{j=1}^{\ell} u_j^{2} }
  \\
  & < \frac 1 {\alpha_1 \cdots \alpha_{\ell}}
  \int_{B_{R_1}^+}
  \frac
  { \prod_{j} u_j^{ \frac 1 {\alpha_j} - 1 } du_1\,\cdots\,du_{\ell} }
  { \varepsilon + \sum_{j=1}^{\ell} u_j^{2} },
  \end{aligned}
\end{equation}
where $B_{R_1}^+$ denotes the portion of the ball of radius
$R_1 = \sqrt\ell \max_j r^{\alpha_j}$, 
centered at the origin 
in $\mathbb{R}^\ell$, where all the coordinates are positive.
Similarly we find
\begin{equation}
\label{rewrite the int pt 2 lwr bd}
  \begin{aligned}
  \int_0^r \cdots \int_0^r
  \frac
  { dx_1\,\cdots\,dx_{\ell} }
  { \varepsilon + \sum_{j=1}^{\ell} x_j^{2\alpha_j} }
  & > \frac 1 {\alpha_1 \cdots \alpha_{\ell}}
  \int_{B_{R_0}^+}
  \frac
  { \prod_{j} u_j^{ \frac 1 {\alpha_j} - 1 } du_1\,\cdots\,du_{\ell} }
  { \varepsilon + \sum_{j=1}^{\ell} u_j^{2} },
  \end{aligned}
\end{equation}
where $R_0 = \min_j r^{\alpha_j}$.  
In spherical coordinates $(\rho, \varphi_1, \ldots, \varphi_{\ell-1})$ we have
\begin{equation}
  \notag 
  \begin{aligned}
  u_j & = \rho ( \cos \varphi_j )
        \bigl( \prod_{q<j} \sin \varphi_q \bigr)
        \quad \text{ for } j = 1,\dots, \ell-1 \\
  u_{\ell} & = \rho \bigl( \prod_{q<\ell} \sin \varphi_q \bigr) \\
  du & = \rho^{\ell-1} \prod_{j=1}^{\ell-2}
       (\sin \varphi_j)^{\ell-1-j}
       d \rho \, d \varphi_1 \, \cdots \, d \varphi_{\ell-1},
  \end{aligned}
\end{equation}
and we find for any $R>0$ that 
\begin{equation}
\label{int switch to spherical}
  \int_{B_R^+}
  \frac
  { \prod_{j} u_j^{ \frac 1 {\alpha_j} - 1 } du_1\,\cdots\,du_{\ell} }
  { \varepsilon + \sum_{j=1}^{\ell} u_j^{2} }
  =
  A
  \int_0^R
  \frac
  { \rho^{2\gamma - 1} \, d\rho }
  { \varepsilon + \rho^2 },
\end{equation}  
where
$ \gamma = \sum_j \frac 1 {2 \alpha_j} $ 
and 
\begin{equation}
\label{A sub m}
  A =
  \prod_{q=1}^{\ell-1}
  \int_0^{\frac \pi 2}
  (\sin \varphi_q )^{-1 + \sum_{j=q+1}^{\ell} \frac 1 {\alpha_j}}
  (\cos \varphi_q )^{-1 +  \frac 1 {\alpha_q}} \,
  d \varphi_q .
\end{equation}
Combining
(\ref{rewrite the integral}),
(\ref{rewrite the int pt 2}),
(\ref{rewrite the int pt 2 lwr bd}),
 (\ref{int switch to spherical}) and (\ref{A sub m}) proves (\ref{claimse}) as required.

We can now finish the proof of the claim.   If $\gamma > 1$ then 
\begin{equation}
  \notag 
  \int_0^{R_1}
  \frac
  { \rho^{2 \gamma -1} }
  { \varepsilon + \rho^2 }
  \, d\rho
  < \int_0^{R_1} \rho^{2 \gamma - 3} \, d\rho
  = C,
\end{equation}
and
\begin{equation}
  \notag 
  \int_0^{R_0}
  \frac
  { \rho^{2 \gamma -1} }
  { \varepsilon + \rho^2 }
  \, d\rho
  > \int_{R_0/2}^{R_0}
  \frac { \rho^{2\gamma-1} } { 1 + \rho^2 }d\rho
  = \frac1C 
\end{equation}
for all $\varepsilon < 1$,
which establishes this case.

Next, if $\gamma = 1$, then
\begin{equation}
  \notag 
  \int_0^{R_1}
  \frac
  { \rho^{2 \gamma -1} }
  { \varepsilon + \rho^2 }
  \, d\rho
  = \frac 12 \log
  \bigl( 1 + \frac {R_1^2} {\varepsilon} \bigr)
  < C \log \frac 1 {\varepsilon} 
\end{equation}
and
\begin{equation}
  \notag 
  \int_0^{R_0}
  \frac
  { \rho^{2 \gamma -1} }
  { \varepsilon + \rho^2 }
  \, d\rho
  = \frac 12 \log
  \bigl( 1 + \frac {R_0^2} {\varepsilon} \bigr)
  > \frac 1 C \log \frac 1 {\varepsilon} .  
\end{equation}

Finally if $0<\gamma < 1$
let $\beta = 2 \gamma - 1$
and note that $-1 < \beta < 1$.
Then by setting $v = \rho/\varepsilon^{1/2}$ we see that 
for all $R>0$ we have
\begin{equation}
  \notag 
  \int_0^R \frac { \rho^\beta \,d\rho } { \varepsilon + \rho^2 }
  =
  \frac 1 {\varepsilon^{ 1 - \gamma }}
  \int_0^{R/\varepsilon^{1/2}}
  \frac { v^\beta \, dv } { 1 + v^2 } .
\end{equation}
But since $-1 < \beta < 1$ we see that
\begin{equation}
  \notag 
  \int_0^{R_1/\varepsilon^{1/2}}
  \frac { v^\beta \, dv } { 1 + v^2 }
  <
  \int_0^1 v^\beta \, dv
  + \int_1^{\infty} v^{\beta-2} \, dv
  = C
\end{equation}
and
\begin{equation}
  \notag 
  \int_0^{R_0/\varepsilon^{1/2}}
  \frac { v^\beta \, dv } { 1 + v^2 }
  >
  \frac 12 \int_0^1 v^\beta \, dv
  = \frac 1 C,
\end{equation}
for all $\varepsilon < R_0^2$.  
This completes the proof of the claim and hence the lemma.
\end{proof}

Finally we complete the proof of the main theorem.

\begin{proof}[Proof of Theorem \ref{main thm}]
This is an immediate consequence of Lemmas \ref{lemma: bds for grad u in terms of C1-C2}, \ref{lemma: bds for |C1-C2|}, \ref{proveass} and \ref{lemmakey} 
and the fact that $|Q_{\ve}(\varphi)|\le C$.
\end{proof}

\section{Optimality of the bounds}
\label{sec: opt bds}

In this section we prove Theorem \ref{thmoptimal} on the optimality of the bounds of Theorem \ref{main thm}.
From  Lemmas \ref{lemma: bds for grad u in terms of C1-C2}, \ref{lemma: bds for |C1-C2|},  \ref{proveass} and \ref{lemmakey} we have
\begin{equation}
  \notag 
 \sup_{\tilde{\Omega}} |\nabla u| \ge \left\{ \begin{array}{ll}\ \displaystyle{\frac{|Q_{\varphi}(\varphi)|}{C\ve^{\gamma}}}, &\quad \textrm{if } \gamma<1 \\
 \displaystyle{ \frac{|Q_{\ve}(\varphi)|}{C\ve |\log \ve|}}, & \quad \textrm{if } \gamma=1 \\
\displaystyle{  \frac{|Q_{\ve}(\varphi)|}{C\ve}}, & \quad \textrm{if } \gamma>1. \end{array} \right.
\end{equation}
Hence to prove Theorem \ref{thmoptimal} it suffices to show the existence of $\varphi$ so that $|Q_{\ve}(\varphi)|  \ge c$ for a constant $c>0$ independent of $\ve$.

Note that in general, it is not the case that $Q_{\ve}(\varphi)$ will be nonzero for all choices of $\varphi$ (one could take $\varphi=0$, for example).  

\begin{proof}[Proof of Theorem \ref{thmoptimal}]
First we note that there exists an  open portion $P$ of $\partial \Omega$ such that  
\begin{equation}\label{claimeqn}
\displaystyle{\left| \frac{\partial v^{\ve}_1}{\partial \nu} - \frac{\partial v^{\ve}_2}{\partial \nu}\right| \ge c},
\end{equation} for all $\ve>0$ sufficiently small.

Indeed, to see this 
define $v^{\ve}=v^{\ve}_1-v^{\ve}_2$, which is harmonic on $\Omega \setminus (D^{\ve}_1 \cup D^{\ve}_2)$, vanishes on $\partial \Omega$, is equal to $1$ on $D_1^{\ve}$ and equal to $-1$ on $D_2^{\ve}$.  We apply the same argument as in Lemma \ref{proveass} above. Let $W_1$ be as defined there, and 
let $\Pi$ be an open portion of $\partial \Omega$ which is contained in $\partial W_1$.  For $\ve>0$ sufficiently small, we claim that $\sup_{p \in \Pi} |\frac{\partial v^{\ve}}{\partial\nu}(p)|  \ge c$ for some constant $c>0$.  If not then we can find a sequence $\ve_j \rightarrow 0$ such that $v^{\ve_j}$ converges (after diffeomorphisms) to a harmonic function $v^0$ on $W_1$ which is equal to $0$ on $\partial \Omega \cap \partial W_1$, equal to $1$ on $\partial D^0_1 \cap \partial W_1$ and 
$\frac{\partial v^0}{\partial \nu}=0$ on an open portion of the boundary of $\partial W_1$.  This is a contradiction which proves (\ref{claimeqn}).

To complete the proof of the theorem, we note that for any sequence $\ve_j \rightarrow 0$, the harmonic functions $v^{\ve_j}_1$, $v^{\ve_j}_2$ and their derivatives are uniformly bounded in a neighborhood of the boundary $\partial \Omega$.  In particular, we can pass to a subsequence $\ve_{j_k}$ such that
\begin{equation} \label{useful}
-\int_{\partial \Omega} \frac{\partial v^{\ve_{j_k}}_1}{\partial \nu} \rightarrow a_1, \quad  -\int_{\partial \Omega} \frac{\partial v^{\ve_{j_k}}_2}{\partial \nu} \rightarrow a_2, \textrm{ as } k\rightarrow \infty,
\end{equation}
for bounded constants $a_1, a_2$ with $a_1, a_2>0$ (by Lemma \ref{proveass}).

Recalling (\ref{claimeqn}), we may assume without loss of generality that on the open portion $P$ of $\partial \Omega$ we have:
$$ - \frac{\partial v^{\ve}_1}{\partial \nu} + \frac{\partial v^{\ve}_2}{\partial \nu} \ge c>0.$$
If $a_2\ge a_1$, let $\varphi$ be a smooth nonnegative function supported on $P$ with $\varphi \ge 1$ on an open set $S\subset P$.  Then
$$-a_2 \varphi  \frac{\partial v^{\ve}_1}{\partial \nu} \ge -a_1 \varphi  \frac{\partial v^{\ve}_2}{\partial \nu} + ca_2, \quad \textrm{on } S,$$
and
$$-a_2 \varphi  \frac{\partial v^{\ve}_1}{\partial \nu} \ge -a_1 \varphi  \frac{\partial v^{\ve}_2}{\partial \nu}, \quad \textrm{ on } \partial \Omega,$$
and using (\ref{useful}) it follows that for $k$ sufficiently large, $Q_{\ve_{j_k}}(\varphi) \le -c'$ for a uniform constant $c'>0$.

The argument in the case when $a_2< a_1$ is similar except that we take $\varphi$ to have the opposite sign and obtain $Q_{\ve_{j_k}}(\varphi) \ge c'$.  

We have shown that for any sequence $\ve_j\rightarrow 0$ there is a subsequence $\ve_{j_k}$ such that $|Q_{\ve_{j_k}}(\varphi)| \ge c>0$.  Arguing by contradiction, this implies that $|Q_{\ve}(\varphi)| \ge c>0$ for all $\ve>0$ sufficiently small, after possibly shrinking $c>0$.
\end{proof}

We end with a remark on the argument for optimality of estimates in \cite[Section 3]{BLY}.  Bao-Li-Yin assume that $\Omega$ has a reflective symmetry and that the strictly convex set $D^0_2$ is a reflection of $D^0_1$, and, using a different argument, obtain examples for a large class of boundary data $\varphi$.  They also consider the case of $\Omega=\mathbb{R}^n$ (see \cite[Proposition 3.2]{BLY}).

\section{Examples} \label{sectionexamples}

We illustrate the above results in some special configurations in $\mathbb{R}^3$, using coordinates $x,y,z$. We  indicate briefly how Theorem \ref{main thm} can be applied in each case.

\subsection{A parabolic cylinder and a quartic cylinder}

Consider an example in $\mathbb{R}^3$ where $D_1$ and $D_2$ are
only close to each other near the origin,
and near there $\partial D_2$ is given locally as the graph
of the parabolic cylinder
$z = g := \varepsilon + x^2$
and $\partial D_1$ is the graph of the
quartic cylinder 
$z = f := - y^4$.
Then $g-f = \varepsilon +x^2 - y^4$,
so that $\alpha = (1,2)$ and $\gamma = \frac 12 + \frac 14 = \frac 34$.
Then in this configuration we find
\[
 \frac{|Q_{\ve}(\varphi)|}{C \ve^{3/4}} \le  \sup_{\tilde{\Omega}} | \nabla u | \le \frac C {\varepsilon^{3/4}} .
\]

\subsection{Encircled tori}

Consider two tori, with one tightly ringed around the other.
See figure \ref{fig: encircled tori}.


\begin{figure}[h]
\begin{tikzpicture}
[
  declare function=
  {
    a = 2;
    b = 1;
    c = 0.75; 
    sm(\t) = c/(sqrt(b^2*(cos(\t r))^2+a^2*(sin(\t r))^2));
    inellx(\t) = a*cos(\t r) + sm(\t)*(-b)*cos(\t r);
    inelly(\t) = b*sin(\t r) + sm(\t)*(-a)*sin(\t r);
    aa = 1.6;
    bb = 1;
    beta = 0.4;
    xd2(\p,\q) = aa - sin(\p r)*(bb + beta*cos(\q r));
    yd2(\p,\q) = cos(\p r)*(bb + beta*cos(\q r));
    zd2(\q) = beta*sin(\q r);
  },
  every plot/.style={domain=0:2*pi, variable=\t, smooth},
  scale=2,
]

\filldraw [fill=white] ellipse [x radius=a, y radius=b];
\def\sa{0.7}
\draw plot [domain=\sa:pi-\sa] ( {inellx(\t)}, {inelly(\t)} );

\begin{scope}

\fill [white, even odd rule]
  (aa,0) circle [radius=bb+beta]
  (aa,0) circle [radius=bb-beta];

\draw (aa,0) circle [radius=bb+beta];
\draw (aa,0) circle [radius=bb-beta];
\draw
  (aa, bb+beta) arc
  [
    x radius=beta/2, y radius=beta,
    start angle=90, end angle=270
  ]
;
\draw [dotted]
  (aa, bb+beta) arc
  [
    x radius=beta/2, y radius=beta,
    start angle=90,  end angle=-90
  ]
;
\draw
  (aa, -bb+beta) arc
  [
    x radius=beta/2, y radius=beta,
    start angle=90, end angle=270
  ]
;
\draw [dotted]
  (aa, -bb+beta) arc
  [
    x radius=beta/2, y radius=beta,
    start angle=90,  end angle=-90
  ]
;
\end{scope}

\def\sa{0.2}
\fill [white]
  (a,0)
  plot [domain=2*pi:pi] ({a*cos(\t r)}, {b*sin(\t r)})
  -- ( {inellx(pi+\sa)}, {inelly(pi+\sa)} )
  plot [domain=pi+\sa:2*pi-\sa] ( {inellx(\t)}, {inelly(\t)} )
  -- (a,0)
;
\draw plot [domain=pi+\sa:2*pi-\sa] ( {inellx(\t)}, {inelly(\t)} );
\draw 
  (-a,0) arc [x radius=a, y radius=b, start angle=180, end angle=360];

\draw [<->]
  (a,0) -- (aa+bb-beta,0)
  node [midway, below] {$\varepsilon$};

\end{tikzpicture}
\caption{Encircled tori.}
\label{fig: encircled tori}
\end{figure}
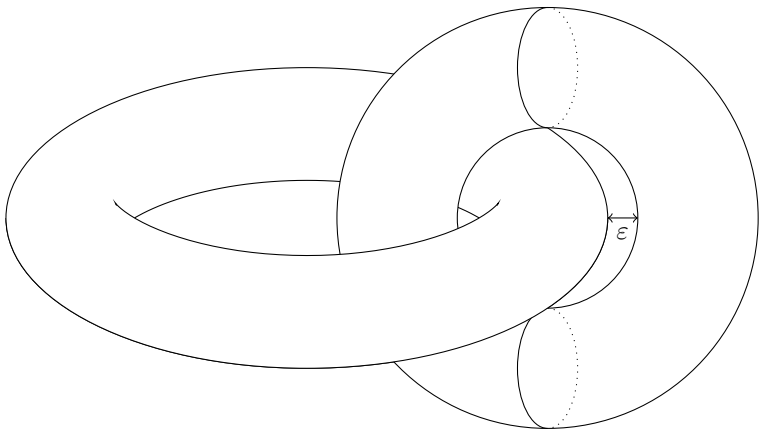


Specifically, let $D_2$ be the open solid torus in $\mathbb{R}^3$
bounded by
\[
  \bigl( \sqrt{ x^2 + y^2 } - a \bigr)^2 + z^2 = A^2,
\]
for constants $a>A>0$.  The boundary torus is the result of revolving the circle
$ (x-a)^2 + z^2 = A^2 $ in the $xz$-plane
about the $z$-axis.

Let $D_1$ be the region bounded by
\[
  \bigl( \sqrt{ (x-a)^2 + z^2 } - b \bigr)^2 + y^2 = B^2,
\]
for constants $b>B>0$, which is the circle $y^2 + (z-b)^2 = B^2$ in the
$x=a$ plane,
revolved about the line through $(a, 0,0)$ which is
parallel to the $y$-axis.

If we assume that $a \gg b$ and that
$b =A + \varepsilon + B$,
then $D_1$ is wrapped around $D_2$
with only $\varepsilon$ of distance between the two.

As $\ve \rightarrow 0$ the two boundaries intersect in a circle.
The narrow region, which is a neighborhood of this circle, can be covered with open boxes $U_i$ with $\alpha^i = (1, \infty)$ and $\gamma_i=1/2$.  We illustrate this in the case of a neighborhood of the point $(a-A, 0,0)$.

Near the point $(a-A, 0,0)$,
we can describe the boundaries of $D_1, D_2$
as functions of $y, z$.
Solving for $x$ in the equation for $D_2$ we find
that the boundary of $D_2$ is given locally as a graph
\[
  x = g(y,z) = a-A
  - \frac {y^2} {2(a-A)}
  + \frac {z^2} {2A}
  + \cdots
\]
$D_1$ is given locally as a graph
\[
  x = f(y,z) =
  a - b + B
  - \frac {y^2} {2B}
  + \frac {z^2} {2(b-B)}
  +  \cdots
\]
Compute
\[
  g-f = \varepsilon
  + \frac { a - A - B } { 2 B (a-A) } y^2
  + O(\ve|z|^2) + O(|y|^3),
\]
so that
\[
  \frac 1C (\varepsilon + y^2)
  < g-f
  < C (\varepsilon + y^2) .
\]
Hence the $\alpha^i$ and $\gamma_i$ corresponding to the box are given by  $(1,\infty)$ and $\frac12$ respectively.

The estimate we obtain for $\sup |\nabla u|$ is
\[
\frac{|Q_{\ve}(\varphi)|}{C\sqrt{\ve}} \le \sup_{\tilde{\Omega}} | \nabla u | \le \frac C {\sqrt\varepsilon} .
\]

\subsection{A torus and a sphere}

Consider the region in $\mathbb R^3$ inside the torus obtained
by revolving a planar disk of radius $A$ about an axis a
distance $a > A$ from the disk's center.
For example let $D_1$ be the region
\begin{equation}
  \notag 
  D_1 = \{ (x,y,z) \in \mathbb R^n \mid
  ( \sqrt{x^2+y^2} - a )^2 + z^2 < A^2 \} .
\end{equation}
Now suppose $D_2$ is the region inside a sphere of radius $R$
in $\mathbb R^3$, which is a distance $\ve$ from $D_1$.  The optimal estimate for $|\nabla u|$ will depend on where the sphere is centered.

Most configurations are already covered by the result of Bao-Li-Yin \cite{BLY0}.
For example, if the sphere were centered at
$(a+A+R+\varepsilon, 0, 0)$, 
then near the point $(a+A,0,0)$ 
the boundary of $D_1$ can be described to second order by
\begin{equation}
  \notag 
  x \approx a+A- C_1 y^2 - C_2 z^2
\end{equation}
and the boundary of $D_2$ is given, again to second order,
by
\begin{equation}
  \notag 
  x \approx a+A+ \ve+C_3 y^2 + C_4 z^2 
\end{equation}
where $C_1, \ldots, C_4$ are positive constants.  
Hence we find $\gamma = 1$ in this configuration, giving
$$
  \frac{ |Q_{\varepsilon}(\varphi)| }{ C \varepsilon |\log \ve| }
  \le \sup_{\tilde{\Omega}} | \nabla u |
  \le \frac { C } { \varepsilon |\log \ve| }
$$

On the other hand, a special configuration is when the sphere has radius $R = a-A-\varepsilon$, centered at the origin (namely, the sphere is  ``in the donut hole'').   See figure
\ref{sphere closely surrounded by a torus}.  Then we have an entire circle of close proximity
between the two regions.
In this case one can calculate $\gamma = \frac 12$ and thus
\begin{equation*}
  \notag 
  \frac{ |Q_{\varepsilon}(\varphi)| }{ C \sqrt\varepsilon }
  \le \sup_{\tilde{\Omega}}| \nabla u |
  \le \frac { C } { \sqrt\varepsilon },
\end{equation*}
as in the case of encircled tori above.


\begin{figure}[h]
\begin{tikzpicture}
[
  declare function=
  {
    a = 2;
    b = 1;
    c = 0.6; 
    sm(\t) = c/(sqrt(b^2*(cos(\t r))^2+a^2*(sin(\t r))^2));
    inellx(\t) = a*cos(\t r) + sm(\t)*(-b)*cos(\t r);
    inelly(\t) = b*sin(\t r) + sm(\t)*(-a)*sin(\t r);
  },
  every plot/.style={domain=0:2*pi, variable=\t, smooth},
  scale=2,
]

\filldraw [fill=white] ellipse [x radius=a, y radius=b];
\def\sa{0.2}
\draw plot [domain=\sa:pi-\sa] ( {inellx(\t)}, {inelly(\t)} );

\shade [ball color=gray!5] circle [radius=1.3];

\def\sa{0.05}
\fill [white]
  (a,0)
  plot [domain=2*pi:pi] ({a*cos(\t r)}, {b*sin(\t r)})
  -- ( {inellx(pi+\sa)}, {inelly(pi+\sa)} )
  plot [domain=pi+\sa:2*pi-\sa] ( {inellx(\t)}, {inelly(\t)} )
  -- (a,0)
;
\draw plot [domain=pi+\sa:2*pi-\sa] ( {inellx(\t)}, {inelly(\t)} );
\draw 
  (-a,0) arc [x radius=a, y radius=b, start angle=180, end angle=360];

\def\xx{0.27}
\draw plot [domain=0:180]
 ( 0, {\xx*cos(\t)}, {1.65+\xx*sin(\t)} );
\draw [dotted] plot [domain=180:360]
 ( 0, {\xx*cos(\t)}, {1.65+\xx*sin(\t)} );

\end{tikzpicture}
\caption{Sphere closely surrounded by a torus.}
\label{sphere closely surrounded by a torus}
\end{figure}
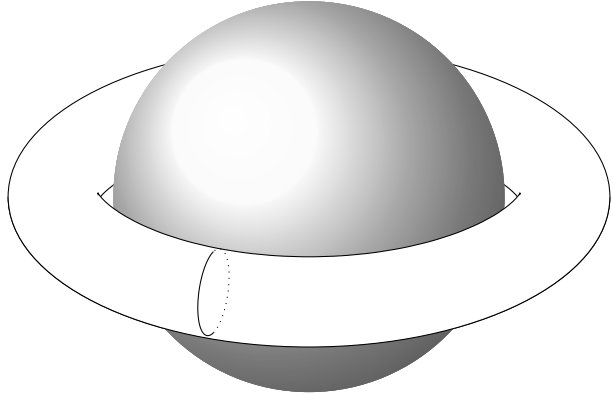


\end{document}